\documentclass[10pt]{article}
\usepackage{amsfonts}
\usepackage{amsfonts,latexsym,amsmath,amscd,geometry}
\geometry{margin=1in} \setlength\headheight{0.1cm}
\usepackage{amssymb}
\usepackage{latexsym}



\topmargin       -0.20in \oddsidemargin 0.1in \evensidemargin 0.1in
\marginparwidth   0.00in \marginparsep 0.00in \textwidth 16cm
\textheight 23.5cm



\usepackage{amsmath,amssymb,amsthm,amsfonts}
\usepackage{mathrsfs}
\usepackage{bbm}

\usepackage{color}

\newtheorem{lemma}{Lemma}[section]
\newtheorem{theorem}{Theorem}[section]

\newtheorem{proposition}{Proposition}[section]

\numberwithin{equation}{section}

\arraycolsep=1.5pt

\newcommand{\dis}{\displaystyle}

\newcommand{\R}{\mathbb{R}}


\newcommand{\CA}{\mathcal{A}}

\newcommand{\CE}{\mathcal{E}}

\newcommand{\CI}{\mathcal{I}}
\newcommand{\CJ}{\mathcal{J}}
\newcommand{\CK}{\mathcal{K}}

\newcommand{\CM}{\mathcal{M}}

\newcommand{\CR}{\mathcal{R}}

\newcommand{\al}{\alpha}

\newcommand{\ga}{\gamma}

\newcommand{\de}{\delta}
\newcommand{\si}{\sigma}
\newcommand{\pa}{\partial}
\newcommand{\ka}{\kappa}
\newcommand{\eps}{\epsilon}
\newcommand{\rh}{\rho}
\newcommand{\ta}{\theta}

\newcommand{\eqdef}{\overset{\mbox{\tiny{def}}}{=}}

\begin{document}
\title{{\bf Stability of  contact discontinuity for the Navier-Stokes-Poisson system with free
boundary}}
\author{Shuangqian Liu\thanks{Department of Mathematics, Jinan University, Guangzhou 510632, P.R. China.
Email: tsqliu@jnu.edu.cn},\quad Haiyan Yin\thanks{School of
Mathematical Sciences, Huaqiao University, Quanzhou 362021, P.R.
China. Email: yinhaiyan2000@aliyun.com},\quad Changjiang
Zhu\thanks{Corresponding author. School of Mathematics, South China
University of Technology, Guangzhou 510641, P.R. China. Email:
cjzhu@mail.ccnu.edu.cn}  }

\date{}


\maketitle

\begin{abstract}
This paper is concerned with the study of the nonlinear stability of the contact discontinuity of
the Navier-Stokes-Poisson system with free boundary in the case where the electron background density satisfies
an analogue of the Boltzmann relation. We especially allow that the electric potential
can take distinct constant states at boundary. On account of the quasineutral assumption,  we first construct
a viscous contact wave through the quasineutral Euler equations, and
then prove that such a non-trivial profile is time-asymptotically stable under small perturbations for the
corresponding initial boundary value problem of the Navier-Stokes-Poisson system.
The analysis is based on the techniques developed in \cite{DL} and an elementary
$L^2$ energy method.
\end{abstract}

\medskip

{\bf Key words.} viscous contact discontinuity, quasineutral Euler equations, stability, free boundary.

\medskip

{\bf AMS subject classifications.} 35B35, 35Q35, 82D10.


\tableofcontents

\section{Introduction}
\subsection{The problem}
The dynamics of the charged particles in the collisional dusty plasma 
can be described by the Navier-Stokes-Poisson (denoted as NSP in the sequel) system \cite{GSK}. The one-dimensional NSP system in the Eulerian coordinates
takes the form of
\begin{eqnarray}\label{NSPe}
&&\left\{\begin{aligned}
& \pa_{t}\rho+\pa_{x}(\rho u)=0,\\
& \pa_{t}( \rho u)+\pa_{x}(\rho u^{2}+p)
=\rho\pa_{x}\phi+\mu \pa_{x}^{2}u,\\
&\pa_{t}W+\pa_{x}(Wu+pu)=\rho u\pa_{x}\phi+\mu \pa_{x}(u\pa_{x}u)+\kappa\pa_{x}^{2}\ta,\\
&\pa_{x}^{2}\phi=\rho-\rho_{e}(\phi).
\end{aligned}\right.
\end{eqnarray}
The unknown functions $\rho$, $u$ and $\ta$ stand for the
density, velocity and absolute temperature of ions, respectively, while
$\mu>0$ is the viscosity coefficient and $\kappa>0$ is the heat conductivity
coefficient. $W$ stands for the total energy of the
ions, taking the following form:
\begin{eqnarray*}\label{1.2}
W=\frac{\rho u^{2}}{2}+\frac{p}{\gamma-1},
\end{eqnarray*}
where $\gamma>1$ is the adiabatic exponent.  $p$ is the pressure
which is given by
\begin{eqnarray*}\label{1.3}
p=R\rho\ta=A\rho^{\gamma}e^{\frac{\gamma-1}{R}S},
\end{eqnarray*}
where $S$ is the entropy and $A,$ $R$ are both positive constants.
The self-consistent electric potential $\phi=\phi(x,t)$ is induced by the total charges through the Poisson equation.
The density $\rho_e=\rho_e(\phi)$ of electrons in \eqref{NSPe} depends only on the potential in the sense of  an analogue of the so-called Boltzmann relation, cf.~\cite{Ch,GP}. Specifically, through the paper we suppose that

\begin{description}
  \item[$(\CA)$] $\rho_e(\phi): (\phi_m,\phi_M)\to (\rho_m,\rho_M)$ is a smooth function with 
 \begin{equation*}
\rho_m=\inf\limits_{\phi_m<\phi<\phi_M} \rho_e(\phi),\quad \rho_M=\sup\limits_{\phi_m<\phi<\phi_M} \rho_e(\phi),
\end{equation*}
satisfying the following two assumptions:

\medskip
$(\mathcal {A}_1)$ $\rho_e(0)=1$ with $0\in (\phi_m,\phi_M)$;

$(\mathcal {A}_2)$ $\rho_e(\phi)>0$, $\rho'_e(\phi)<0$ for each $\phi\in (\phi_m,\phi_M)$. 


\end{description}
\noindent The assumption $(\mathcal {A}_1)$ just means that the electron density has been normalized to be unit when the potential is zero, since the electric potential in \eqref{NSPe} can be up to an arbitrary constant. The sign of the first derivative of the function $\rh_e(\phi)$ in
the assumption $(\CA_2)$ plays a crucial role in our analysis, it is to be further clarified later on, see \eqref{d.f}, etc.

An important example satisfying $(\mathcal {A})$ can be given as
\begin{equation}
\label{D-dene}
\rho_e(\phi)=\left[1-\frac{\ga_e-1}{\ga_e} \frac{\phi}{A_e}\right]^{\frac{1}{\ga_e-1}}, \quad \phi_m=-\infty,\quad\phi_{M}=\frac{\ga_e}{\ga_e-1}A_e,
\end{equation}
with $\ga_e\geq 1$ and $A_e>0$ being constants.
Note that $\rho_e(\phi)\to e^{-\frac{\phi}{A_e}}$ and $\phi_M\to +\infty$
as $\ga_e\to1^+$, which corresponds to the classical Boltzmann relation. In fact, \eqref{D-dene} can be formally deduced from the momentum equation of the isentropic Euler-Poisson system for the fluid of electrons with the adiabatic exponent $\ga_e$ under the zero-limit of electron mass, namely,
$
\pa_x\left(A_e\rho_e^{\ga_e}\right)=-\rho_e\pa_x\phi.
$

In this paper, we consider the system \eqref{NSPe} in the part
$+\infty>x\geq x(t)$, where $x=x(t)$ is a free boundary with the
following dynamical boundary conditions
\begin{equation}\label{BCe}
\frac{dx(t)}{dt}=u(x(t),t),\ \ x(0)=0,\ (p-\mu\partial_{x} u)\mid_{x=x(t)}=p_{-},\ \theta(x(t),t)=\theta_{-},\ \phi(x(t),t)=\phi_{-}.
\end{equation}
We also assume $\phi$ satisfies the boundary condition at far field:
\begin{eqnarray}\label{pfc}
\lim_{x\rightarrow+\infty}\phi(x,t)=\phi_{+}.
\end{eqnarray}
The initial
data is given by
\begin{equation}\label{id.e}
(\rho,u,\theta)(x,0)=(\rho_{0},u_{0},\theta_{0})(x),\ \
\lim_{x\rightarrow+\infty}(\rho_{0},u_{0},\theta_{0})(x)=(\rho_{+},u_{+},\theta_{+}).
\end{equation}
Here $\rho_{+}>0$, $\theta_{\pm}>0$, $p_->0$, $u_+$ and $\phi_{\pm}$ are assumed to be constant states. Also, $\rh_0(x)>0$ is supposed, so that the ions flow has no vacuum state.
In addition, we of course assume $\ta_0(x)$ satisfies the compatibility condition
and $\phi$ satisfies the
the quasineutral condition at far field, i.e.
\begin{equation}\label{cqa.e}
\theta_{0}(0)=\theta_{-},\  \rho_{e}(\phi_{+})=\rho_{+}.
\end{equation}
Our main purpose concerns the large time behavior of solutions to \eqref{NSPe}, \eqref{BCe}, \eqref{pfc} and \eqref{id.e}, to explore this, it is more convenient to
use the Lagrangian coordinates.
That is, consider the coordinate transformation:
\begin{equation*}\label{1.5}
x\Rightarrow\int_{x(t)}^x\rho(y,t)dy,\ \ t\Rightarrow t.
\end{equation*}
We still denote the Lagrangian coordinates by $(x,t)$ for simplicity
of notation. Noticing that
$$
\int_{x(t)}^{x}\rho(y,t)dy\rightarrow+\infty,\ \textrm{as}\ x\rightarrow+\infty,
$$
one sees that \eqref{NSPe}, \eqref{BCe}, \eqref{pfc} and \eqref{id.e} can be transformed as the problem with fixed boundary in the form of
\begin{eqnarray}\label{NSPl}
&&\left\{\begin{aligned}
&\pa_{t}v-\pa_{x}u=0,\ \ x>0,\ t>0,\\
& \pa_{t} u+\pa_{x}p
=\frac{\pa_{x}\phi}{v}+\mu \pa_{x}\left(\frac{\pa_{x}u}{v}\right),\ \ x>0,\ t>0,\\
&\frac{R}{\gamma-1}\pa_{t}\ta+p\pa_{x}u=\mu\frac{(\pa_{x}u)^{2}}{v}+\kappa\pa_{x}\left(\frac{\pa_{x}\ta}{v}\right),\ \ x>0,\ t>0,\\
&\pa_{x}\left(\frac{\pa_{x}\phi}{v}\right)=1-v\rho_{e}(\phi),\ \ x>0,\ t>0,\\
\end{aligned}\right.
\end{eqnarray}
with
boundary condition
\begin{equation}\label{BCl}
\theta(0,t)=\theta_{-}, \ \left(p-\mu\frac{\partial_{x}
u}{v}\right)(0,t)=p_{-},\ \phi(0,t)=\phi_{-},\ \lim_{x\rightarrow+\infty}\phi(x,t)=\phi_{+},\ t\geq0,
\end{equation}
and the initial data
\begin{equation}\label{idl}
(v,u,\theta)(x,0)=(v_{0},u_{0},\theta_{0})(x),\ \ \ x\geq0, \ \ \ \
\
\lim_{x\rightarrow+\infty}(v_{0},u_{0},\theta_{0})(x)=(v_{+},u_{+},\theta_{+}).
\end{equation}
Here $v=1/\rho$ stands for the specific volume.  
Moreover,
\begin{equation*}\label{cqa}
\ta_0(0)=\ta_-\ \textrm{and}\ v_{+}=\frac{1}{\rho_{e}(\phi_{+})}
\end{equation*}
hold according to \eqref{cqa.e}.

\subsection{Quasineutral Euler equations and contact waves}
In order to study the large time behavior of the solution $[v(x,t),u(x,t),\ta(x,t),\phi(x,t)]$
to the
initial boundary value problem \eqref{NSPl}, \eqref{BCl} and \eqref{idl}, we expect that $[v(x,t),u(x,t),\ta(x,t),\phi(x,t)]$ tends time-asymptotically to viscous contact wave 
to the Riemann problem on the
quasineutral Euler system
\begin{eqnarray}\label{MEt}
\left\{
\begin{array}{clll}
\begin{split}
&\pa_tv-\pa_x u=0,\\
&\pa_t u+\pa_xp=\frac{\pa_x\phi}{v},\\
&\frac{R}{\gamma-1}\pa_{t}\theta+p\pa_xu=0,\\
&1/v=\rho_e(\phi),
\end{split}
\end{array}
\right.
\end{eqnarray}
with Riemann initial data given by
\begin{equation}
\label{MEtid}
[v, u, \theta](x,0)= \left\{\begin{array}{rll}[v_-,u_{-}, \theta_-],&\ \ x<0,\\[2mm]
[v_+,u_{+}, \theta_+],&\ \ x>0.
\end{array}
\right.
\end{equation}
According to \cite{Da,S}, one sees that the Riemann problem \eqref{MEt} and \eqref{MEtid}
admits a contact discontinuity solution 
\begin{eqnarray}\label{con.dis}
\begin{split}
\left[v^{CD}, u^{CD}, \ta^{CD},\phi^{CD}\right](x,t)=\left\{\begin{array}{rll}[v_-,u_-, \theta_-, \phi_{-}],&\ \ x<0,\\[2mm]
[v_+,u_+, \theta_+, \phi_{+}],&\ \ x>0,
\end{array}
\right.
\end{split}
\end{eqnarray}
on the condition that
\begin{equation}\label{cd.con}
\begin{split}
u_-=u_+,\ \
p_{-}\eqdef p(v_-,\theta_-)=p_{+}+p^{\phi}(v_+)-p^{\phi}(v_-),
\end{split}
\end{equation}
where
\begin{equation*}\label{cd.con2}
p_{+}=p(v_+,\theta_+),\
\phi_{\pm}=\rho_{e}^{-1}(1/v_{\pm}) \ \textrm{and}\ p^{\phi}=p^{\phi}(v)=\int^{v}\frac{1}{\varrho^{3}\rho_{e}'(\rho_{e}^{-1}(\frac{1}{\varrho}))}d\varrho.
\end{equation*}

On the other hand, due to the dissipation effect of the NSP system \eqref{NSPl}, a viscous
contact wave
$\left[v^{cd}, u^{cd}, \ta^{cd},\phi^{cd}\right]$ corresponding to the contact discontinuity
$\left[v^{CD}, u^{CD}, \ta^{CD},\phi^{CD}\right]$ defined as \eqref{con.dis}
can be constructed as follows.
We first denote $p^{cd}=p^{cd}(v^{cd},\ta^{cd})=\frac{R\ta^{cd}}{v^{cd}}$. Since the quasineutral pressure $p^{cd}+p^{\phi}$ for the  profile $\left[v^{cd}, u^{cd}, \ta^{cd},\phi^{cd}\right]$
is expected to be almost constant, we set
\begin{equation}\label{p.cs}
p_{-}=p^{cd}+\int_{v_-}^{v^{cd}}\frac{1}{\varrho^{3}\rho_{e}'(\rho_{e}^{-1}(\frac{1}{\varrho}))}d\varrho.
\end{equation}
Noticing that $\frac{\pa p^{cd}}{\pa v^{cd}}<0$ and $\rho_{e}'(\cdot)<0$, from which and \eqref{p.cs} and the implicit function theorem, we see that
there exists a differentiable function $f(\ta^{cd})$ such that
\begin{equation}\label{r.vta}
v^{cd}=f(\ta^{cd}), \ v_\pm=f(\ta_\pm),
\end{equation}
provided that $|\ta_+-\ta_-|$ is suitably small. Furthermore, by a direct calculation, it follows that
\begin{equation}\label{d.f}
\begin{split}
f'(\theta^{cd})=\frac{R}{p^{cd}-\frac{1}{(v^{cd})^{2}\rho_{e}'(\phi^{cd})}}>0.
\end{split}
\end{equation}
We now rewrite the leading part of $\eqref{NSPl}_3$ (the third equation of \eqref{NSPl}) as
\begin{equation}\label{tac.eqn}
\begin{split}
\frac{R}{\gamma-1}\pa_{t}\ta^{cd}+p^{cd}\pa_{x}u^{cd}=\kappa\pa_{x}\left(\frac{\pa_{x}\ta^{cd}}{v^{cd}}\right).
\end{split}
\end{equation}
With \eqref{r.vta} and \eqref{tac.eqn} in hand, we further conjecture that $\left[v^{cd}, u^{cd}, \ta^{cd}\right]$
satisfies
\begin{eqnarray}\label{pro.eqn}
\left\{
\begin{array}{clll}
\begin{split}
&\pa_{t}v^{cd}- \pa_{x}u^{cd}=0,\ v^{cd}=f(\ta^{cd}),\\
&\frac{R}{\gamma-1}\pa_{t}\ta^{cd}+p^{cd}\pa_{x}u^{cd}=\kappa\pa_{x}\left(\frac{\pa_{x}\ta^{cd}}{v^{cd}}\right),\\
&\ta^{cd}(0,t)=\theta_{-},\ \ta^{cd}(+\infty,t)=\theta_{+},\ v^{cd}(0,t)=v_-,\ v^{cd}(+\infty,t)=v_+.
\end{split}
\end{array}
\right.
\end{eqnarray}
By virtue of \eqref{pro.eqn}, we obtain a nonlinear diffusion
equation as follows:
\begin{equation}\label{tac.eqn2}
\begin{split}
\pa_{t}\ta^{cd}=\frac{\kappa}{g(\theta^{cd})}\pa_{x}\left(\frac{\pa_{x}\ta^{cd}}{f(\theta^{cd})}\right),\
\ \ta^{cd}(0,t)=\theta_{-},\ \ \ta^{cd}(+\infty,t)=\theta_{+},
\end{split}
\end{equation}
where $g(\theta^{cd})=\frac{R}{\gamma-1}+p^{cd}f'(\theta^{cd})>0.$ Applying the same argument as in \cite{APKo}, one sees that \eqref{tac.eqn2} admits a unique
self similarity solution $\theta^{cd}(\xi)$,
$\xi=\frac{x}{\sqrt{1+t}}$. Additionally, it turns out that $\theta^{cd}$ is
a monotone function, increasing if $\theta_{+}>\theta_{-}$ and
decreasing if $\theta_{+}<\theta_{-}$, and more importantly, one can show that there
exists some positive constant $\overline{\delta}$, such that
for $\delta=|\theta_{+}-\theta_{-}|\leq \overline{\delta}, $
$\theta^{cd}$ satisfies
\begin{equation}\label{tac.pt}
\begin{split}
(1+t)\left|\partial^{2}_{x}\theta^{cd}\right|+(1+t)^{\frac{1}{2}}\left|\partial_{x}\theta^{cd}\right|+\left|\theta^{cd}-\theta_{\pm}\right|
\leq &C\delta e^{-\frac{c_{1}x^{2}}{1+t}}, \ \ \textrm{as}\ \
x\rightarrow +\infty,
\end{split}
\end{equation}
where $c_{1}$ is some positive constant. After $\ta^{cd}$ and $v^{cd}$ are obtained, we now define $\left[u^{cd},\phi^{cd}\right]$
as follows
\begin{eqnarray}\label{pro.eqn2}
\left\{
\begin{array}{clll}
\begin{split}
&\phi^{cd}=\rh_e^{-1}(1/v^{cd}),\\
&u^{cd}=u_+-\ka\int_x^{+\infty}\frac{f'(\ta^{cd})}{g(\ta^{cd})}\pa_x\left(\frac{\pa_x\ta^{cd}}{f(\ta^{cd})}\right)dx
\\&\quad=u_{+}+\frac{\kappa
f'(\theta^{cd})}{g(\theta^{cd})f(\theta^{cd})}\partial_{x}\theta^{cd}
+\kappa\int_{x}^{+\infty}\frac{(\partial_{x}\theta^{cd})^{2}}{f(\theta^{cd})}\left(\frac{f'}{g}\right)'(\theta^{cd})
dx,\\[2mm]
&\phi^{cd}(0,t)=\phi_-,\ \phi^{cd}(+\infty,t)=\phi_+,\ u^{cd}(+\infty,t)=u_+.
\end{split}
\end{array}
\right.
\end{eqnarray}
It should be noted that $\phi_{\pm}=\rh_e^{-1}(1/v_\pm)$, and $u^{cd}(0,t)$ may not equal to $u_+$.

In view of \eqref{con.dis}, \eqref{pro.eqn}, \eqref{tac.pt} and \eqref{pro.eqn2}, it is straightforward to compute that $\left[v^{cd},u^{cd},\theta^{cd},\phi^{cd}\right]$
satisfies
$$\left\|\left[v^{cd}-v^{CD},u^{cd}-u^{CD},\theta^{cd}-\ta^{CD},\phi^{cd}-\phi^{CD}\right]\right\|_{L^{p}(\R_{+})}
=O\left(\kappa^{\frac{1}{2p}}\right)(1+t)^{\frac{1}{2p}},\ \ p\geq1,$$
which implies the viscous contact wave
$\left[v^{cd},u^{cd},\theta^{cd},\phi^{cd}\right](x,t)$ constructed in \eqref{pro.eqn} and \eqref{pro.eqn2} approximates the contact
discontinuity solution
$\left[v^{CD}, u^{CD}, \ta^{CD},\phi^{CD}\right]$ to the quasineutral Euler
system \eqref{MEt}  in $L^{p}$ norm, $p\geq1$ on any finite time
interval as the heat conductivity coefficients $\kappa$ tends to
zero. 
Moreover, we see that the viscous contact wave
$\left[v^{cd},u^{cd},\theta^{cd},\phi^{cd}\right](x,t)$  solves the Navier-Stokes-Poisson
system \eqref{NSPl} time asymptotically, that is,
\begin{eqnarray*}\label{1.23*x}
\left\{
\begin{array}{clll}
\begin{split}
&\pa_{t}v^{cd}- \pa_{x}u^{cd}=0,\\[2mm]
&\pa_{t} u^{cd}+\pa_{x}p^{cd} =\frac{\pa_{x}\phi^{cd}}{v^{cd}}+\mu
\pa_{x}\left(\frac{\pa_{x}u^{cd}}{v^{cd}}\right)+\CR_{1},\\[2mm]
&\frac{R}{\gamma-1}\pa_{t}\ta^{cd}+p^{cd}\pa_{x}u^{cd}=\mu\frac{(\pa_{x}u^{cd})^{2}}{v^{cd}}
+\kappa\pa_{x}\left(\frac{\pa_{x}\ta^{cd}}{v^{cd}}\right)+\CR_{2},\\[2mm]
&\pa_{x}\left(\frac{\pa_{x}\phi^{cd}}{v^{cd}}\right)=1-v^{cd}\rho_{e}(\phi^{cd})+\CR_{3},
\end{split}
\end{array}
\right.
\end{eqnarray*}
where
\begin{equation*}\label{1.24*x}
\begin{split}
\CR_{1}=&\pa_{t}\left(\frac{\kappa
f'(\theta^{cd})}{g(\theta^{cd})f(\theta^{cd})}\partial_{x}\theta^{cd}
+\int^{+\infty}_{x}\frac{\kappa(\partial_{x}\theta^{cd})^{2}}{f(\theta^{cd})}\left(\frac{f'}{g}\right)'(\theta^{cd})
dx\right)-\mu\partial_{x}\pa_t\left[\ln \left(f(\theta^{cd})\right)\right]\\
=&O(\delta)(1+t)^{-\frac{3}{2}}e^{-\frac{c_{1}x^{2}}{1+t}}, \
\mbox{as}\ \ x\rightarrow +\infty,
\end{split}
\end{equation*}
\begin{equation*}\label{1.25*x}
\begin{split}
\CR_{2}=-\mu\frac{\left(f'(\theta^{cd})\partial_{t}\theta^{cd}\right)^{2}}{f(\theta^{cd})}=O(\delta)(1+t)^{-2}e^{-\frac{c_{1}x^{2}}{1+t}},
\ \mbox{as}\ \ x\rightarrow +\infty,
\end{split}
\end{equation*}
and
\begin{equation*}\label{1.26*x}
\begin{split}
\CR_{3}=\pa_{x}\left(\frac{\pa_{x}\phi^{cd}}{v^{cd}}\right)=O(\delta)(1+t)^{-1}e^{-\frac{c_{1}x^{2}}{1+t}},
\ \mbox{as}\ \ x\rightarrow +\infty.
\end{split}
\end{equation*}

\subsection{Main results}
Now we are in a position to state our main results. 
\begin{theorem}\label{main.res.}
For any given $[v_+,u_+,\ta_+, p_-]$ with $v_+>0$ and $\ta_+>0$, suppose that $[v_-,u_-,\ta_-]$ satisfies \eqref{cd.con}, $\phi_\pm=\rh_e^{-1}(v_\pm)$ with $\phi_\pm\in (\phi_m,\phi_M)$, and the function $\rh_e(\cdot)$ satisfies the assumption $(\CA)$.
Let $\left[v^{cd},u^{cd},\ta^{cd},\phi^{cd}\right](x,t)$ be the viscous
contact wave defined in \eqref{pro.eqn} and \eqref{pro.eqn2} with strength
$\delta=|\theta_{+}-\theta_{-}|.$ There
exist positive constants $\epsilon_{0}>0$ and $C_0>0$, such that
if $\left[v_{0}(x)-v^{cd}(x,0),u_{0}(x)-u^{cd}(x,0)\right]\in H^1$, $\left[\ta_{0}(x)-\ta^{cd}(x,0)\right]\in H^1_0$ and
\begin{equation*}\label{p.ID}
\left\|\left[v_{0}(x)-v^{cd}(x,0),u_{0}(x)-u^{cd}(x,0),\ta_{0}(x)-\ta^{cd}(x,0)\right]\right\|_{H^1}
+\de\leq
\epsilon_{0},
\end{equation*}
then the initial boundary value problem
\eqref{NSPl}, \eqref{BCl} and \eqref{idl} admits a unique global solution
$[v,u,\ta,\phi](x,t)$ satisfying
$\left[v-v^{cd},u-u^{cd}\right]\in C(0,+\infty; H^1)$, $\left[\ta(x)-\ta^{cd},\phi-\phi^{cd}\right]\in C(0,+\infty; H_0^1)$ and
\begin{equation}\label{main.eng}
\sup\limits_{t\geq0}\left\|\left[v-v^{cd},u-u^{cd},\ta-\ta^{cd},\phi-\phi^{cd}\right]\right\|_{H^1}
\leq C_0\eps_0^{2/3}.
\end{equation}
Moreover, it holds that
\begin{equation}\label{sol.lag}
\begin{split}
\lim_{t\rightarrow+\infty}\sup_{x\in
\R_+}\left|\left[v-v^{cd},u-u^{cd},\ta-\ta^{cd},\phi-\phi^{cd}\right]\right|=0.
\end{split}
\end{equation}
\end{theorem}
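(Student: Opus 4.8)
The plan is to linearize \eqref{NSPl}--\eqref{BCl} about the viscous contact wave and to close a global-in-time energy estimate by a continuity argument. Introduce the perturbation
$$[\widetilde v,\widetilde u,\widetilde\theta,\widetilde\phi](x,t):=[\,v-v^{cd},\ u-u^{cd},\ \theta-\theta^{cd},\ \phi-\phi^{cd}\,](x,t),$$
and subtract from \eqref{NSPl} the approximate system satisfied by $[v^{cd},u^{cd},\theta^{cd},\phi^{cd}]$ modulo the remainders $\CR_1,\CR_2,\CR_3$. This yields a quasilinear parabolic system for $[\widetilde v,\widetilde u,\widetilde\theta]$ coupled to the elliptic relation $\pa_x(\pa_x\widetilde\phi/v)=v^{cd}\rho_e(\phi^{cd})-v\rho_e(\phi)-\CR_3$, with homogeneous data $\widetilde\theta(0,t)=0$ and $\widetilde\phi(0,t)=\widetilde\phi(+\infty,t)=0$. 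A structural fact used repeatedly is that the contact wave matches the stress condition in \eqref{BCl} exactly: since $v^{cd}(0,t)=v_-$ forces $\pa_x u^{cd}(0,t)=\pa_t v^{cd}(0,t)=0$, the construction \eqref{p.cs}--\eqref{pro.eqn} gives $(p^{cd}-\mu\pa_xu^{cd}/v^{cd})(0,t)=p_-$, so that $(p-\mu\pa_x\widetilde u/v)-(p^{cd}-\mu\pa_xu^{cd}/v^{cd})$ vanishes at $x=0$. Because $v^{cd}$ is bounded above and below on $\R_+$ and $\delta$ is small, $v$ stays in the range where $p(\cdot,\theta)$ is smooth and strictly decreasing, so local existence with $[\widetilde v,\widetilde u]\in C(0,T;H^1)$, $[\widetilde\theta,\widetilde\phi]\in C(0,T;H^1_0)$ is standard. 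The global result then reduces to proving, under the a priori smallness $N(T)+\delta\le\epsilon$, the estimate
$$N(T)^2:=\sup_{0\le t\le T}\big\|[\widetilde v,\widetilde u,\widetilde\theta,\widetilde\phi](t)\big\|_{H^1}^2+\int_0^T\!\mathcal{D}(t)\,dt\ \le\ C\Big(\big\|[\widetilde v_0,\widetilde u_0,\widetilde\theta_0]\big\|_{H^1}^2+\delta^{4/3}\Big),$$
where $\mathcal{D}(t)$ collects $\int\big(\mu(\pa_x\widetilde u)^2+\kappa(\pa_x\widetilde\theta)^2+(\pa_x\widetilde\phi)^2+\widetilde\phi^{\,2}\big)/v\,dx$ and, after the first-order step, also $\int\big((\pa_x\widetilde v)^2+(\pa_x^2\widetilde u)^2+(\pa_x^2\widetilde\theta)^2\big)\,dx$.

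For the zeroth-order estimate I would run the usual weighted $L^2$ energy method for compressible Navier--Stokes about a non-constant profile: pair the $\widetilde u$-equation with $\widetilde u$, the $\widetilde\theta$-equation with a combination of $\widetilde\theta$ and a pressure-type variable, and the perturbed Poisson equation with $-\widetilde\phi$ (equivalently with $\widetilde\phi/v$ after an integration by parts, or with its time derivative), exploiting the sign $\rho_e'<0$ to generate the coercive terms $\int(\pa_x\widetilde\phi)^2/v\,dx$ and $\int\big(-v\rho_e'(\phi^{cd})+\cdots\big)\widetilde\phi^{\,2}\,dx$; the cross term $\int(\pa_x\widetilde\phi/v)\widetilde u\,dx$ is controlled against this electric dissipation and the density term. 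Every boundary term at $x=0$ drops out --- those involving $\pa_x\widetilde u$ by the matched stress condition, those involving $\pa_x\widetilde\theta$ and $\pa_x\widetilde\phi$ because $\widetilde\theta(0,t)=\widetilde\phi(0,t)=0$ --- and those at $x=+\infty$ by decay. What remains is: (i) cubic perturbation terms bounded by $\epsilon\,\mathcal{D}(t)$; (ii) quadratic terms carrying a contact-wave coefficient, typically $\pa_x\theta^{cd}=O(\delta)(1+t)^{-1/2}e^{-c_1x^2/(1+t)}$ or $\pa_xu^{cd}$; and (iii) terms involving $\CR_1,\CR_2,\CR_3$. The terms (iii) are harmless: each carries a Gaussian in $x$ and a time weight decaying at least like $(1+t)^{-1}$, so pairing against the perturbation and using $\|e^{-c_1x^2/(1+t)}\|_{L^2_x}\le C(1+t)^{1/4}$ with Young's inequality gives an absorbable dissipation plus an $O(\delta^2)$ term.

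\emph{The main obstacle is the terms of type (ii).} The weight $\delta(1+t)^{-1/2}$ is not integrable in time, so a contribution such as $\int_0^T\!\!\int\delta(1+t)^{-1/2}e^{-c_1x^2/(1+t)}|\widetilde u|^2\,dx\,dt$ cannot be dominated by the basic energy alone. This is where the techniques of \cite{DL} come in: using the concentration of $\pa_x\theta^{cd}$ in the self-similar variable $\xi=x/\sqrt{1+t}$, the interpolation $\|\widetilde u(t)\|_{L^\infty}\le C\|\widetilde u(t)\|_{L^2}^{1/2}\|\pa_x\widetilde u(t)\|_{L^2}^{1/2}$, and --- to handle the nonzero boundary trace $\widetilde u(0,t)$, since $u^{cd}(0,t)$ need not equal $u_+$ --- a trace/Hardy inequality applied to $\widetilde u(x,t)=\widetilde u(0,t)+\int_0^x\pa_y\widetilde u\,dy$, one bounds each such term by $\eta\int_0^T\mathcal{D}(t)\,dt+C_\eta\big(\delta^{1/3}N(T)^2+\delta^{4/3}\big)$, the first piece absorbed into the dissipation and $\delta^{1/3}N(T)^2$ into the left-hand side for $\delta$ small. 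It is exactly this trade-off between the wave strength and the dissipation that blocks a linear closure and produces the fractional loss $\epsilon_0\mapsto\epsilon_0^{2/3}$ in \eqref{main.eng} --- the exponent $4/3$ above being precisely what is needed.

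For the first-order estimate I would differentiate the system in $x$ and pair with $\pa_x\widetilde v,\pa_x\widetilde u,\pa_x\widetilde\theta$. Parabolicity supplies $\int\big((\pa_x^2\widetilde u)^2+(\pa_x^2\widetilde\theta)^2\big)dx$, and the missing density dissipation $\int(\pa_x\widetilde v)^2\,dx$ is recovered by the classical device of rewriting $\mu\pa_x\widetilde u/v$ through $\pa_xu=\pa_tv$ and pairing the resulting momentum identity with $-\pa_x\widetilde v$, which produces $\int\tfrac{R\theta}{v^2}(\pa_x\widetilde v)^2\,dx$ at the cost of $\int(\pa_x\widetilde u)^2\,dx$ already controlled. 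The genuinely new feature here is the boundary integrals at $x=0$ generated by the integrations by parts in the viscous and heat-conduction terms; these are the technical heart of incorporating the free boundary and are controlled using the stress condition --- which gives $\mu\pa_x\widetilde u(0,t)/v(0,t)=p(v,\theta)(0,t)-p_-=O(|\widetilde v(0,t)|)$ since $\widetilde\theta(0,t)=0$ --- the equations themselves to express the remaining boundary derivatives, and the trace inequality $|f(0,t)|\le C\|f(t)\|_{L^2}^{1/2}\|\pa_x f(t)\|_{L^2}^{1/2}$, so that all of them reduce to $\eta\,\mathcal{D}(t)$ plus terms already tolerated. The elliptic equation for $\widetilde\phi$ is closed on its own: $\rho_e'<0$ makes $-\pa_x^2+(\text{positive})$ coercive, whence $\|\widetilde\phi(t)\|_{H^2}\le C\|\widetilde v(t)\|_{H^1}+C\|\CR_3(t)\|_{L^2}$ with $\|\CR_3(t)\|_{L^2}=O(\delta)(1+t)^{-3/4}$, so $\widetilde\phi$ never demands dissipation not already in hand. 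Summing the zeroth- and first-order estimates and choosing $\eta$, then $\epsilon$ and $\delta$, small gives $N(T)^2\le C\big(\|[\widetilde v_0,\widetilde u_0,\widetilde\theta_0]\|_{H^1}^2+\delta^{4/3}\big)+C\delta^{1/3}N(T)^2$; absorbing the last term and using $\|[\widetilde v_0,\widetilde u_0,\widetilde\theta_0]\|_{H^1}\le\epsilon_0$ and $\delta\le\epsilon_0$ gives $N(T)^2\le C\epsilon_0^{4/3}$, i.e.\ \eqref{main.eng}, which closes the continuity argument and furnishes the global solution. Finally, \eqref{sol.lag} follows in the standard manner: $\int_0^\infty\mathcal{D}(t)\,dt<\infty$ together with the uniform $H^1$ bound --- which, via the equations, also makes the time-derivatives of the relevant $L^2$ quantities integrable --- forces $\|\pa_x\widetilde v,\pa_x\widetilde u,\pa_x\widetilde\theta,\pa_x\widetilde\phi,\widetilde\phi\|_{L^2}\to0$ as $t\to\infty$, whence $\sup_{x\in\R_+}\big|[\widetilde v,\widetilde u,\widetilde\theta,\widetilde\phi](t)\big|\to0$ by $H^1(\R_+)\hookrightarrow L^\infty(\R_+)$.
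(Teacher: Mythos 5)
Your overall architecture (perturbation about the viscous contact wave, $H^1$ energy method, continuity argument) matches the paper's, and several of your structural observations are correct --- in particular that the contact wave satisfies the stress condition exactly at $x=0$. But the two steps you treat as routine are precisely where the proof lives, and as sketched they do not close. First, the coupling term $\int_{\R_+}\bigl(\frac{\pa_x\phi}{v}-\frac{\pa_x\phi^{cd}}{v^{cd}}\bigr)\psi\,dx$ cannot simply be ``controlled against the electric dissipation and the density term'': after one integration by parts it contains $-\int\frac{\pa_x\psi\,\si}{v}\,dx$ with $\si=\phi-\phi^{cd}$, and neither $\|\si\|^2$ nor $\|\psi\|^2$ ever appears in the time-integrated dissipation (pairing the \emph{elliptic} Poisson equation with $-\si$ only yields the static bound $\|\si\|_{H^1}\lesssim\|\varphi\|+\de$, not a dissipation rate, and the half-line Poincar\'e inequality only gives $|\si(x)|\le x^{1/2}\|\pa_x\si\|$), so Cauchy--Schwarz is fatal here. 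The paper's Step 1 instead substitutes the Poisson equation for $\si$, Taylor-expands $\rh_e(\si+\phi^{cd})$ to \emph{third} order, and exhibits exact cancellations (the groups $I_2,\dots,I_7$ in \eqref{I1}--\eqref{zs2}) that convert the dangerous quadratic terms into time derivatives of sign-definite quantities such as $\frac{v^{cd}}{2}|\rho_e'(\phi^{cd})|\si^2$ plus genuinely higher-order remainders; none of this is recoverable from a generic coercivity argument for the elliptic operator.

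Second, your bound for the critical term $\int_0^T\!\!\int_{\R_+}(\pa_x\theta^{cd})^2\psi^2\,dx\,dt$ by interpolation fails: $\|\psi\|_{L^\infty}^2\int(\pa_x\theta^{cd})^2dx\le C\de^2(1+t)^{-1/2}\|\psi\|\|\pa_x\psi\|$ integrates to a logarithmically divergent quantity, and no trace or Hardy inequality repairs this; your claimed bound $C_\eta(\de^{1/3}N(T)^2+\de^{4/3})$ is asserted, not derived. The paper's Lemma \ref{key.es} handles this with the Huang--Li--Matsumura weight $w=\int_0^x(\pa_y\theta^{cd})^2dy$, testing the momentum equation, rewritten in terms of the modified pressure variable $R\zeta+\bigl(\frac{1}{vv^{cd}\rho_e'(\phi^{cd})}-p^{cd}\bigr)\varphi$, against that variable times $vw$; the coefficient of $\psi^2(\pa_x\theta^{cd})^2$ that emerges is $-\frac{1}{2v^{cd}\rho_e'(\phi^{cd})}+\frac{\ga}{2}p^{cd}v$, whose positivity is exactly where $(\CA_2)$ enters. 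Relatedly, you have misplaced the origin of the exponent $2/3$: it comes from the boundary, via Lemma \ref{pbd}'s identity $\varphi(0,t)=\varphi_0(0)e^{-p_-t/\mu}$ (itself a consequence of the stress condition), which renders the non-vanishing boundary terms of Steps 2--3 (e.g.\ $\psi(0,t)\pa_t\varphi(0,t)$ and $\varphi(0,t)\pa_x\si(0,t)$) integrable in time and, after Young's inequality, produces the $\|\varphi_0\|_{H^1}^{4/3}$ on the right of \eqref{eng.p1}. Your plan to control boundary traces by $|f(0,t)|\le C\|f\|^{1/2}\|\pa_xf\|^{1/2}$ alone yields only an $O(N(T)^2)$ bound pointwise in time with no decay, so the corresponding time integrals are unbounded.
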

\medskip
From a physical point of view, the motion of the ion-dust plasma (cf.~\cite{KG,GSK}), the self-gravitational viscous gaseous stars (cf.~\cite{Chan}) and the charged particles in semiconductor devices (cf.~\cite{MRC}) can be governed by the NSP system.
On the other hand,
the NSP system at the fluid level can be justified by taking the hydrodynamical limit of the Vlasov-type Boltzmann equation by the Chapman-Enskog expansion, cf.~\cite{CC,Gr,G,GJ}.
In recent years, there have been a great number of mathematical studies of the NSP system. In what follows, we only mention some of them related to our interest. Ducomet \cite{Du} obtained the existence of nontrivial stationary solutions with compact support and proved the dynamical stability related to a free-boundary value problem for the three-dimensional NSP system in the case that the background profile is vacuum.
 Donatelli \cite{D} established the global existence of weak solutions to the Cauchy problem with large initial data.
Recently, Ding-Wen-Yao-Zhu \cite{DWYZ-09} proved the global existence of weak solutions to the one dimensional isentropic NSP system with density-dependent
viscosity and free boundary.
Donatelli-Marcati \cite{DM} studied the quasineutral limit by using some dispersive estimates of Strichartz type. We point out that some nonexistence result of global weak solutions was also obtained in Chae \cite{Chae}. Zhang-Fang \cite{ZF} studied the  large-time behavior of the spherically symmetric NSP  system with degenerate viscosity coefficients and with vacuum in three dimensions. Jang-Tice \cite{JT} investigated the linear and nonlinear
dynamical instability for the Lane-Emden solutions of the
NSP system in three dimensions under some condition on the adiabatic exponent. Tan-Yang-Zhao-Zou \cite{TYZZ} established the global strong solution to the one-dimensional non-isentropic NSP system with large data for density-dependent viscosity.  In the case when the background profile is strictly positive, the global existence and convergence rates for the three-dimensional NSP system around a non-vacuum constant state were studied by Li-Matsumura-Zhang  \cite{LMZ}, Zhang-Li-Zhu \cite{ZLZ-11} and Hsiao-Li \cite{HL} through carrying out the spectrum analysis.
We point out that Duan \cite{D-NSM} also used the method of Green's function to obtain the large time behaviors of the more complex Navier-Stokes-Maxwell system.

Another interesting and challenging problem is to study the stability of the NSP system on half space, to the best of our knowledge, there are very few results in this line.
Duan-Yang \cite{DY} recently proved the stability of rarefaction wave and boundary layer for outflow problem on the two-fluid NSP system.
The convergence rate of corresponding solutions toward the stationary solution was obtained in Zhou-Li \cite{ZL}. We remark that due to the techniques of the proof, it was assumed in \cite{DY} that all physical parameters in the model must be unit, which is obviously impractical since ions and electrons generally have different masses and temperatures.  One important point used in \cite{DY} is that the large-time behavior of the electric potential is trivial and hence the two fluids indeed have the same asymptotic profiles which are constructed from the Navier-Stokes equations without any force instead of the quasineutral system. Duan-Liu \cite{DL} then improved
the results of \cite{DY} in the sense that all physical constants appearing in the model can be taken in a general way, and the large-time profile of the electric potential is nontrivial on the basis of the quasineutral assumption.
For the investigations in the stability of the rarefaction wave of the related models, see also \cite{DL2} for the study of the more complicated Vlasov-Poisson-Boltzmann system with more general background profile.

When there is no self-consistent force, the NSP system reduces to the well-known Navier-Stokes equations. It is known that there have been extensive investigations
on the stability of wave patterns, namely, shock wave, rarefaction wave, contact discontinuity and their compositions,
in the context of gas dynamical equations and related kinetic equations. Among them, we only mention
\cite{Go,HLM,HXY,JQZ,KT,KZ,LX,LYYZ,LY,M,MN86,MN92,MN-S,PLM,Y} and reference therein.
Moreover, we would also point out some previous works only related to the current work.
Huang-Mastumura-Shi \cite{HMS-04} proved the stability of contact discontinuity of compressible Navier-Stokes equations with free boundary
for the ideal polytropic gas through the construction of viscous contact wave profiles, the key observation in \cite{HMS-04}
is that the asymptotic profile of the temperature $\ta$ satisfies a nonlinear diffusion equation, which can be solved
by the technique developed in \cite{APKo, HLiu}, and later on Huang-Mastumura-Xin \cite{HMX-05} and Huang-Li-Mastumura \cite{HLM}
established the stability of the contact waves of the Cauchy problem.
Recently Huang-Wang-Zhai \cite{HWZ-10} extended the results in \cite{HMS-04} to the general gas, however, for the Cauchy problem,
it still remains an interesting open problem
to generalize the results in \cite{HMX-05, HLM} for the general gas.

In this paper, we intend to study the stability of the contact wave of the NSP system \eqref{NSPe} with free boundary.
Motivated by \cite{DL} and \cite{HMS-04}, we first construct the nontrivial asymptotic profiles of the quasineural Euler equations,
it should be noted that the background density $\rh_e(\phi)$ satisfying assumption $(\CA)$ allows that the asymptotic profile of
the electrical potential can be distinct at the boundary. Then we perform the elementary energy estimates to the perturbative equations
to obtain the global existence and the large time behaviors.
Compared to the classical Navier-Stokes system without any force, the main difficulty in the proof for the NSP system is to treat
the estimates on the terms caused by the potential function $\phi$. Precisely,
the delicate term $\left(\frac{\pa_x\phi}{v}-\frac{\pa_x\phi^{cd}}{v^{cd}}\right)\psi$ can not be directly controlled,
 as in \cite{DL}, the key point to overcome the difficulty is to use the good dissipative property from  the Poisson equation
 by expanding $\rh_e(\phi)$ around the asymptotic profile up to the third-order.
In addition, it is shown \cite{DL} that the sign of the first
derivative of the rarefaction profile of the velocity and the good
time decay properties of the smooth rarefaction profiles are
important to the {\it a priori} estimate. Thus compared with
\cite{DL} in which the stability of the rarefaction wave of the NSP
system is proved, a new difficulty will arise, that is, the critical
term ${\int_{0}^{T}\int_{\R_{+}}}\psi^{2}(\pa_x\theta^{cd})^{2}dxdt$
is beyond control, unlike that of \cite{HLM}, we need to pay extra
effort to take care of the terms involving the self-consistent
force, and it can be seen that the assumption $(\CA_2)$ plays an
essential role to obtain the desired estimates, see Lemma
\ref{key.es} for the details.

\medskip
The rest of the paper is arranged as follows.  In the main part
Section 2, we give the {\it a priori} estimates on the solutions of
the perturbative equations. The proof of Theorem \ref{main.res.} is
concluded in Section 3. In the Appendix, we present the details that
are left in the proofs of the previous sections for completeness of
the paper.

\medskip
\noindent {\it Notations.}  Throughout this paper, we denote a generally large constant by $C$, which may vary from line to line. For two quantities $a$ and
$b$, $a\thicksim b$ means $\frac{1}{C}a \leq  b \leq C a $. $L^p =
L_x^p(\mathbb{\R_{+}}) \ (1 \leqslant p \leqslant \infty)$ denotes
 the usual Lebesgue space on $\R_+=[0,+\infty]$ with its norm $ \|\cdot\|_ {L^p}$, and for convenient, we
write $ \| \cdot \| _{ L^2} = \| \cdot \|$. We also use $H^{k}$ $(k\geq0)$ to denote the usual Sobolev space with respect to $x$ variable on $\R_+$.
$C([0,T ]; H^k) (k\geq0)$ denotes the space of the
continuous functions on the interval $[0, T ]$ with values in $H^{k}$.
We use $ (\cdot, \cdot )$ to denote the inner
product over the Hilbert space $ L^{2}$. 
$[f_1,f_2]\in H^1$ means $f_1\in H^1$ and $f_2\in H^1$, and so on so forth.

\section{The a priori estimates}


 In order to study the stability of contact wave of the initial boundary value problem \eqref{NSPl}, \eqref{BCl} and \eqref{idl}, that is, to prove Theorem \ref{main.res.}, we first
define the perturbation as
$$
[\varphi, \psi, \zeta, \sigma](x,t)=\left[v-v^{cd},u-u^{cd}, \ta-\ta^{cd},
\phi-\phi^{cd}\right](x,t).
$$
Then $[\varphi, \psi, \zeta, \sigma](x,t)$ satisfies
\begin{eqnarray}
&&\pa_t\varphi-\pa_x\psi=0,\label{pv}\\
&&\pa_t \psi+\pa_x
p-\pa_xp^{cd}=\left(\frac{\pa_x\phi}{v}-\frac{\pa_x\phi^{cd}}{v^{cd}}\right)+\mu\pa_x\left(\frac{\pa_x
\psi}{v}\right)+F,\label{pu}\\
&&\frac{R}{\gamma-1}\pa_t\zeta+p\pa_x u-p^{cd}\pa_x u^{cd}=\kappa
\pa_x\left(\frac{\pa_x\ta}{v}-\frac{\pa_x\ta^{cd}}{v^{cd}}\right)+G,\label{pta}\\
&&v^{cd}\pa_x\left(\frac{\pa_x\sigma}{v}\right)=-\varphi+v\left[1-v^{cd}\rho_{e}\left(\sigma+\phi^{cd}\right)\right]
-v^{cd}\pa_x\left(\frac{\pa_x\phi^{cd}}{v}\right),\label{pp}\\
&&\left(p(v,\theta)-\mu\frac{\pa_x u}{v}\right)(0,t)=p_{-},\
\zeta(0,t)=\si(0,t)=\si(+\infty,t)=0,\label{p.BC}\\
&&\left[\varphi,\psi,\zeta](x,0) =[\varphi_0,\psi_0,
\zeta_0\right](x)
\notag\\
&&\qquad\qquad\quad\quad=\left[v_0(x)-v^{cd}(x,0),u_0(x)-u^{cd}(x,0),\ta_0(x)-\ta^{cd}(x,0)\right],\label{p.id}
\end{eqnarray}
where $x\geq0$, $t\geq0$,
$F=-\partial_{t}u^{cd}+\mu\pa_x(\frac{\pa_x u^{cd}}{v})$ and
$G=\mu\frac{(\pa_x u)^{2}}{v}.$ 
We note that the structural identity \eqref{pp} will be of extremal importance for the later proof.

The local existence of \eqref{NSPl}, \eqref{BCl} and \eqref{idl} can
be established by the standard iteration argument cf. \cite{HMS-04}
and hence will be skipped in the paper. To obtain the global
existence part of Theorem \ref{main.res.}, it suffices to prove the
following {\it a priori} estimates. For results in this direction,
we have
\begin{proposition}\label{ape}
Assume all the conditions listed in Theorem \ref{main.res.} hold.
Let $[\varphi,\psi,\zeta,\sigma]$ be a solution to the initial
boundary value problem \eqref{pv}, \eqref{pu}, \eqref{pta}, \eqref{pp}, \eqref{p.BC} and \eqref{p.id} on $0\leq t\leq T$ for some positive
constant T. There are constants $\de>0$, $\eps_0>0$ and $C>0$, such that if $[\varphi,\psi]\in C(0,T; H^1)$, $[\zeta,\sigma]\in C(0,T; H_0^1)$ and
\begin{equation}\label{aps}
\begin{split}
\sup_{0\leq t\leq
T}\left\|[\varphi,\psi,\zeta,\sigma](t)\right\|_{H^1}+\de\leq \eps_{0},
\end{split}
\end{equation}
then the solution  $[\varphi,\psi,\zeta,\sigma](x,t)$ satisfies
\begin{equation}\label{eng.p1}
\begin{split}
\sup_{0\leq t\leq
T}&\left\|[\varphi,\psi,\zeta,\sigma](t)\right\|_{H^1}^2+\int_{0}^{T}\|\pa_x\varphi\|^2
+\|\pa_x\left[\psi,\zeta,\sigma\right]\|_{H^1}^2dt\\
\leq&
C\delta+C\left\|[\varphi_{0},\psi_{0},\zeta_{0}]\right\|^{4/3}_{H^1}.
\end{split}
\end{equation}
\end{proposition}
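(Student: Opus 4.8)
The plan is to carry out an $L^2$-energy method in the standard order for viscous contact waves: basic energy estimate for $[\varphi,\psi,\zeta]$, then estimate on $\partial_x\varphi$ via the momentum equation, then higher-order estimates on $\partial_x[\psi,\zeta]$, and finally an estimate on $\sigma$ and $\partial_x\sigma$ extracted from the Poisson structure \eqref{pp}. Throughout one works under the \emph{a priori} smallness assumption \eqref{aps}, which gives $v,\ta\sim$ const and lets one absorb cubic and higher nonlinearities. First I would derive the basic energy estimate: multiply \eqref{pv} by an appropriate pressure-type factor, \eqref{pu} by $\psi$, and \eqref{pta} by a factor like $\zeta/\ta^{cd}$ (or use the relative entropy $\Phi\big(\tfrac{v}{v^{cd}}\big)$, $\Phi(s)=s-1-\ln s$, as in \cite{HMS-04,DL}), add them, and integrate over $\R_+$. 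The viscous terms $\mu\partial_x(\partial_x\psi/v)$ and $\kappa\partial_x(\cdots)$ produce the good dissipation $\int(\partial_x\psi)^2+\int(\partial_x\zeta)^2$; the boundary terms at $x=0$ are handled using the boundary conditions \eqref{p.BC} (the combination $p-\mu\partial_x u/v$ is fixed at $p_-$, $\zeta(0,t)=0$); the error terms $F,G,\CR_i$ are controlled by the Gaussian decay estimates \eqref{tac.pt} and the $\CR_i$ bounds, yielding $O(\delta)$ contributions after time integration.

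The delicate point — and the one the authors flag as the main obstacle — is the forcing term $\big(\tfrac{\partial_x\phi}{v}-\tfrac{\partial_x\phi^{cd}}{v^{cd}}\big)\psi$ coming from the Poisson coupling in \eqref{pu}, together with the critical term $\int_0^T\!\!\int_{\R_+}\psi^2(\partial_x\ta^{cd})^2\,dx\,dt$ that is not absorbable by the parabolic dissipation alone (the rarefaction-wave sign trick of \cite{DL} is unavailable here). The key is to exploit the structural identity \eqref{pp}: rewrite $\tfrac{\partial_x\phi}{v}-\tfrac{\partial_x\phi^{cd}}{v^{cd}}=\partial_x\big(\tfrac{\partial_x\sigma}{v}\big)+(\text{lower order})$, integrate by parts in $x$ so that the $\psi$ multiplier turns into $\partial_x\psi=\partial_t\varphi$ paired against $\partial_x\sigma/v$, and then use $v\partial_x(\partial_x\sigma/v)=-\varphi+v[1-v^{cd}\rho_e(\sigma+\phi^{cd})]+\cdots$. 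Expanding $\rho_e(\sigma+\phi^{cd})$ around $\phi^{cd}$ to third order and invoking $\rho_e'<0$ (assumption $(\CA_2)$), the leading quadratic piece $-v v^{cd}\rho_e'(\phi^{cd})\sigma^2$ has a favorable sign and produces genuine dissipation $\int\sigma^2$ and, after a further integration by parts, $\int(\partial_x\sigma)^2$; the cubic remainder is cubic in the small quantities and absorbed. This is precisely Lemma \ref{key.es} in the paper's plan, and it is where the bulk of the work lies; the critical term $\int\psi^2(\partial_x\ta^{cd})^2$ is then dominated using the newly gained $\int\sigma^2$-type dissipation together with the Gaussian weight in \eqref{tac.pt} and a Gagliardo–Nirenberg/Sobolev interpolation, at the cost of the $\eps_0^{4/3}$-type (rather than $\eps_0^2$) term appearing on the right of \eqref{eng.p1}.

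Next I would obtain the estimate on $\partial_x\varphi$: differentiate or rearrange \eqref{pu} so that $\mu\partial_x(\partial_x\psi/v)$ is expressed via $\partial_t\psi+\partial_x p-\partial_xp^{cd}-(\cdots)-F$, observe $\partial_x\psi/v=\partial_t\varphi/v=-\partial_t(\text{something})$ up to lower order so that $\partial_t(\partial_x\varphi\cdot\psi)$-type manipulations (the classical device: multiply the momentum equation by $\partial_x\varphi/v$) generate $\tfrac{d}{dt}\int\varphi\partial_x\varphi\cdots$ plus the good term $\int(\partial_x\varphi)^2$, with all remaining terms controlled by the already-established $L^2$ dissipation of $[\partial_x\psi,\partial_x\zeta,\sigma,\partial_x\sigma]$ and by $\delta$. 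Then the first-order estimates on $\partial_x\psi$ and $\partial_x\zeta$ follow by differentiating \eqref{pu}, \eqref{pta} in $x$, multiplying by $\partial_x\psi$ and $\partial_x\zeta$ respectively, and integrating — the new boundary terms at $x=0$ are again dispatched using \eqref{p.BC} (one differentiates the boundary relation in $t$ to convert $\partial_x u(0,t)$ into controllable quantities), and the second-order profile derivatives $\partial_x^2\ta^{cd}$ decay like $(1+t)^{-1}$ times a Gaussian by \eqref{tac.pt}, hence are integrable. Finally, closing the argument: summing the basic, $\partial_x\varphi$, and first-order estimates, choosing $\delta$ and $\eps_0$ small so that all borrowed terms with small prefactors are absorbed into the left side, and using Sobolev embedding $\|f\|_{L^\infty}\lesssim\|f\|_{H^1}$ to control the $L^\infty$ norms that appeared in the nonlinear estimates, yields exactly \eqref{eng.p1}. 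The main obstacle, to reiterate, is the Poisson-coupling term and the critical $\int\psi^2(\partial_x\ta^{cd})^2$ term in the basic estimate (Lemma \ref{key.es}); everything else is a careful but routine adaptation of the free-boundary viscous-contact-wave machinery of \cite{HMS-04} combined with the quasineutral framework of \cite{DL}.
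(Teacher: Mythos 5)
Your overall architecture (relative-entropy basic estimate, $\partial_x\varphi$ estimate via the momentum equation, higher-order estimates, $\sigma$-control extracted from the Poisson structure with the third-order Taylor expansion of $\rho_e$ and the sign $\rho_e'<0$) matches the paper's Steps 1--3, and you correctly single out the two genuine difficulties: the Poisson coupling term $\bigl(\tfrac{\partial_x\phi}{v}-\tfrac{\partial_x\phi^{cd}}{v^{cd}}\bigr)\psi$ and the critical term $\int_0^T\!\!\int_{\R_+}\psi^2(\partial_x\theta^{cd})^2\,dx\,dt$. One structural remark: in the paper the favorable quadratic pieces $\tfrac{v^{cd}}{2}|\rho_e'(\phi^{cd})|\sigma^2$ and $\tfrac{v^{cd}}{2v^2}(\partial_x\sigma)^2$ produced by the $I_1$-manipulation sit \emph{under a time derivative}, so they control $\|\sigma\|_{H^1}$ pointwise in $t$; the time-integrated dissipation $\int_0^T\|\partial_x\sigma\|_{H^1}^2\,dt$ comes separately from Step 2, where \eqref{pp} is tested against $\partial_x^2\sigma$ alongside the $-v\partial_x\varphi$ multiplier on \eqref{pu}. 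You conflate these two mechanisms, but that is repairable.

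The genuine gap is your treatment of the critical term. You propose to dominate $\int_0^T\!\!\int\psi^2(\partial_x\theta^{cd})^2\,dx\,dt$ ``using the newly gained $\int\sigma^2$-type dissipation together with the Gaussian weight in \eqref{tac.pt} and a Gagliardo--Nirenberg/Sobolev interpolation.'' This does not close: the term involves $\psi$, not $\sigma$, so the $\sigma$-dissipation is of no direct use, and naive interpolation gives only $\int_0^T(1+t)^{-1}\|\psi\|\,\|\psi\|_{H^1}\,dt$, which is logarithmically divergent under the a priori bound \eqref{aps}. The paper's Lemma \ref{key.es} instead uses the Huang--Li--Matsumura antiderivative weight $w=\int_0^x(\partial_y\theta^{cd})^2\,dy$ (so $\|w\|_\infty\lesssim\delta^2(1+t)^{-1/2}$, $\|\partial_t w\|_\infty\lesssim\delta^2(1+t)^{-3/2}$): one first eliminates $\sigma$ from the momentum equation via the Poisson relation $\sigma=-\varphi/(vv^{cd}\rho_e'(\phi^{cd}))+\CM$, then multiplies the rewritten equation by $\bigl[R\zeta+\bigl(\tfrac{1}{vv^{cd}\rho_e'(\phi^{cd})}-p^{cd}\bigr)\varphi\bigr]vw$. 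The point is that after the $\partial_t$-integration by parts the coefficient of $\psi^2(\partial_x\theta^{cd})^2$ that emerges is $-\tfrac{1}{2v^{cd}\rho_e'(\phi^{cd})}+\tfrac{\gamma}{2}p^{cd}v$, which is \emph{positive} precisely because of $(\CA_2)$; this sign is what lets the critical term be moved to the left-hand side. Without this weighted-multiplier computation (or an equivalent substitute) the estimate \eqref{eng.p1} cannot be closed. A minor additional inaccuracy: the exponent $4/3$ on $\|[\varphi_0,\psi_0,\zeta_0]\|_{H^1}$ does not come from the critical-term treatment but from the boundary terms at $x=0$, via $\varphi(0,t)=\varphi_0(0)e^{-p_-t/\mu}$ (Lemma \ref{pbd}) combined with Sobolev and Young inequalities as in \eqref{H.es}.
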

\begin{proof} We divide it by the following three steps.

\textbf{Step 1.} The zero-order energy estimates.

Multiplying \eqref{pv}, \eqref{pu} and \eqref{pta} by
 $-R\ta^{cd}\left(\frac{1}{v}-\frac{1}{v^{cd}}\right)$, $\psi$ and
$\zeta\theta^{-1}$, respectively, then taking the summation of the resulting
equations, we obtain
\begin{equation}\label{zs}
\begin{split}
\pa_{t}&\left(\frac{1}{2}\psi^{2}+R\theta^{cd}\Phi\left(\frac{v}{v^{cd}}\right)
+\frac{R}{\gamma-1}\theta^{cd}\Phi\left(\frac{\theta}{\theta^{cd}}\right)\right)
+\mu\frac{(\pa_{x}\psi)^{2}}{v}\\&+\frac{\kappa}{v\theta}(\pa_{x}\zeta)^{2}+H_{x}+Q_{1}+Q_{2}=F\psi+\frac{\zeta}{\theta}G
+\underbrace{\left(\frac{\pa_x\phi}{v}-\frac{\pa_x\phi^{cd}}{v^{cd}}\right)\psi}_{I_{1}},
\end{split}
\end{equation}
where
\begin{equation*}\label{4.1}
\begin{split}
\Phi(s)=s-1-\ln s,
\end{split}
\end{equation*}
\begin{equation*}\label{4.2}
\begin{split}
H=\left(p-p^{cd}\right)\psi-\mu\frac{\psi\pa_x\psi}{v}-\kappa\frac{\zeta}{\theta}\left(\frac{\pa_x\theta}{v}-\frac{\pa_x\theta^{cd}}{v^{cd}}\right),
\end{split}
\end{equation*}
\begin{equation*}\label{4.3}
\begin{split}
Q_{1}=-R\pa_t\theta^{cd}\Phi\left(\frac{v}{v^{cd}}\right)-p^{cd}\pa_tv^{cd}\left(2-\frac{v}{v^{cd}}-\frac{v^{cd}}{v}\right)
+\frac{R}{\gamma-1}\pa_t\theta^{cd}
\Phi\left(\frac{\theta^{cd}}{\theta}\right)+\frac{\zeta}{\theta}\left(p-p^{cd}\right)\pa_xu^{cd},
\end{split}
\end{equation*}
and
\begin{equation*}\label{4.4}
\begin{split}
Q_{2}=-\kappa\frac{\pa_x\theta}{\theta^{2}v}\zeta\pa_x\zeta-\kappa\frac{\varphi\pa_x\zeta}{\theta
vv^{cd}}\pa_x\theta^{cd}+\kappa\frac{\zeta\varphi\pa_x\theta}{\theta^{2}
vv^{cd}}\pa_x\theta^{cd}.
\end{split}
\end{equation*}

Let us now consider the most delicate term $I_1$ on the right hand
side of \eqref{zs}. The key technique to handle $I_1$ is to use the
good dissipative property of the Poisson equation by expanding
$\rho_{e}(\sigma+\phi^{cd})$ around the asymptotic profile up to the
third-order. Only in this way, we can observe some new cancelations
and obtain the higher order nonlinear terms.

With the aid of \eqref{pp} and
\eqref{pv},  one has
\begin{equation}\label{I1}
\begin{split}
I_1=&-\frac{\pa_x\psi\sigma}{v}
+\frac{\psi\pa_x\varphi\sigma}{v^2}+\frac{\psi\pa_xv^{cd}\sigma}{v^2}
+\frac{\psi\pa_xv^{cd}\varphi}{(v^{cd})^3v\rho_{e}'(\phi^{cd})}+\pa_x\left(\frac{\sigma\psi}{v}\right)\\
=&\underbrace{-\pa_t\left[-v^{cd}\pa_x\left(\frac{\pa_x\sigma}{v}\right)+v\left(1-v^{cd}\rho_{e}(\sigma+\phi^{cd})\right)
-v^{cd}\pa_x\left(\frac{\pa_x\phi^{cd}}{v}\right)\right]\sigma
v^{-1}}_{I_{1,1}}
\\
&\underbrace{+\pa_x\left[-v^{cd}\pa_x\left(\frac{\pa_x\sigma}{v}\right)+v\left(1-v^{cd}\rho_{e}(\sigma+\phi^{cd})\right)
-v^{cd}\pa_x\left(\frac{\pa_x\phi^{cd}}{v}\right)\right]\psi\sigma v^{-2}}_{I_{1,2}}\\
&\underbrace{+\left[-v^{cd}\pa_x\left(\frac{\pa_x\sigma}{v}\right)+v\left(1-v^{cd}\rho_{e}(\sigma+\phi^{cd})\right)
-v^{cd}\pa_x\left(\frac{\pa_x\phi^{cd}}{v}\right)\right]\psi\pa_xv^{cd}(v^{cd})^{-3}[v\rho_{e}'(\phi^{cd})]^{-1}}_{I_{1,3}}\\
&\underbrace{+\frac{\psi\pa_xv^{cd}\sigma}{v^2}}_{I_2}+\pa_x\left(\frac{\sigma\psi}{v}\right).
\end{split}
\end{equation}
To deal with the lower order terms involving $1-v^{cd}\rho_{e}(\sigma+\phi^{cd})$,
we first get from the Taylor's formula with an integral remainder
that
\begin{equation}\label{taylor}
1-v^{cd}\rho_{e}(\sigma+\phi^{cd})=-v^{cd}\rho_{e}'(\phi^{cd})\sigma-\frac{v^{cd}\rho_{e}''(\phi^{cd})}{2}\sigma^{2}
\underbrace{-v^{cd}\int_{\phi^{cd}}^{\phi}\rho_{e}'''(\varrho)\frac{(\phi-\varrho)^{2}}{2}d\varrho}_{I_0}.
\end{equation}
By virtue of \eqref{taylor}, we then compute $I_{1,1}$, $I_{1,2}$ and $I_{1,3}$ as follows:
\begin{equation}\label{I1.1}
\begin{split}
I_{1,1}=&-\frac{1}{2}\pa_t\left(\frac{v^{cd}}{v^2}(\pa_x\sigma)^2\right)
-\pa_t\left(\frac{\pa_x\sigma}{v}\sigma\pa_x\left(\frac{v^{cd}}{v}\right)\right)
-\frac{1}{2}\pa_t\left(\frac{v^{cd}}{v^2}\right)(\pa_x\sigma)^2\\
&+\pa_x\left(\frac{v^{cd}}{v}\right)\frac{\pa_x\sigma}{v}\pa_t\sigma
-v^{cd}\pa_x\left(\frac{\pa_x\sigma}{v}\right)\sigma\pa_t(v^{-1})
+\frac{1}{2}\pa_t\left(v^{cd}\rho_{e}'(\phi^{cd})\sigma^2\right)
\\&+\frac{1}{3}\pa_t\left(v^{cd}\rho_{e}''(\phi^{cd})\sigma^3\right)-
\pa_t I_0\sigma
\underbrace{+\frac{\pa_tv}{v}v^{cd}\rho_{e}'(\phi^{cd})\sigma^2}_{I_3}
+\frac{1}{2}\pa_{t}\left(v^{cd}\rho_{e}'(\phi^{cd})\right)\sigma^2
\\&+\frac{1}{2}\frac{\pa_tv}{v}v^{cd}\rho_{e}''(\phi^{cd})\sigma^3
+\frac{1}{6}\pa_{t}\left(v^{cd}\rho_{e}''(\phi^{cd})\right)\sigma^3
-\frac{\pa_tv}{v}I_0\sigma
+\pa_t\left(v^{cd}\pa_x\left(\frac{\pa_x\phi^{cd}}{v}\right)\right)\sigma
v^{-1}\\&+\pa_x\pa_t\left(\frac{v^{cd}\sigma\pa_x\sigma}{v^{2}}\right)-\pa_x\left(\frac{v^{cd}\pa_t\sigma\pa_x\sigma}{v^{2}}\right),
\end{split}
\end{equation}
\begin{equation}\label{I1.2}
\begin{split}
I_{1,2}=&\frac{v^{cd}}{v^3}\pa^2_x\sigma\pa_x\sigma\psi
+\frac{v^{cd}}{v}\pa^2_x\sigma \sigma \pa_x(\psi v^{-2})
+v^{cd}\pa_x\sigma \pa_x(v^{-1})\pa_x(\sigma \psi v^{-2})\\
&\underbrace{-v^{cd}\rho_{e}'(\phi^{cd})\frac{\pa_x\sigma
\sigma\psi}{v}-
v^{cd}\rho_{e}'(\phi^{cd})\frac{\pa_xv\sigma^2\psi}{v^2}-\pa_{x}\left(v^{cd}\rho_{e}'(\phi^{cd})\right)\frac{\sigma^2\psi}{v}}_{I_4}
-v^{cd}\rho_{e}''(\phi^{cd})\frac{\pa_x\sigma\sigma^2\psi}{v}
\\&-\frac{1}{2}v^{cd}\rho_{e}''(\phi^{cd})\frac{\pa_xv\sigma^3\psi}{v^2}
-\frac{1}{2}\pa_{x}\left(v^{cd}\rho_{e}''(\phi^{cd})\right)\frac{\sigma^3\psi}{v}
+\frac{\pa_xI_0\sigma\psi}{v} +\frac{\pa_xvI_0\sigma\psi}{v^2}\\
& +v^{cd}\pa_x\left(\frac{\pa_x\phi^{cd}}{v}\right)\pa_x\left(\psi\sigma
v^{-2}\right)-\pa_x\left(v^{cd}\pa_x\left(\frac{\pa_x\phi}{v}\right)\psi\sigma
v^{-2} \right),
\end{split}
\end{equation}
\begin{equation}\label{I1.3}
\begin{split}
I_{1,3}=&-\pa_x\left(\frac{\pa_x\sigma}{v}\right)
\frac{\psi\pa_xv^{cd}}{v(v^{cd})^{2}\rho_{e}'(\phi^{cd})}
\underbrace{-\frac{\pa_xv^{cd}\sigma\psi}{(v^{cd})^2}}_{I_5}
\underbrace{-\frac{1}{2}\frac{\rho_{e}''(\phi^{cd})\pa_xv^{cd}\sigma^2\psi}{(v^{cd})^2\rho_{e}'(\phi^{cd})}}_{I_6}\\
&+\frac{\pa_xv^rI_0\psi}{(v^{cd})^3\rho_{e}'(\phi^{cd})}
-\pa_x\left(\frac{\pa_x\phi^{cd}}{v}\right)\frac{\psi\pa_xv^{cd}}{v(v^{cd})^2\rho_{e}'(\phi^{cd})}.
\end{split}
\end{equation}
Note that $I_l$ $(2\leq l\leq 6)$ can not be directly controlled.
To overcome this difficulty, we first get from \eqref{pp} and \eqref{taylor} that
\begin{equation*}
\begin{split}
(I_2+I_5)+I_6+I_4=&-\frac{\pa_xv^{cd}\sigma\psi\varphi(v+v^{cd})}{(vv^{cd})^2}
-\frac{\rho_{e}''(\phi^{cd})\pa_xv^{cd}\sigma^2\psi}{2\rho_{e}'(\phi^{cd})(v^{cd})^2}
-v^{cd}\rho_{e}'(\phi^{cd})\frac{\pa_x\sigma
\sigma\psi}{v}\\&-\frac{\pa_xv\sigma^2\psi}{v^2}v^{cd}\rho_{e}'(\phi^{cd})
-\left(v^{cd}\rho_{e}'(\phi^{cd})\right)_{x}\frac{\sigma^2\psi}{v}
\\=&\frac{\pa_xv^{cd}\sigma^2\psi}{(v^{cd})^2}\left[v^{cd}\rho_{e}'(\phi^{cd})-\frac{\rho_{e}''(\phi^{cd})}{2\rho_{e}'(\phi^{cd})}\right]
-\frac{\pa_x\sigma\sigma\psi}{v}v^{cd}\rho_{e}'(\phi^{cd})
+\frac{\pa_xv^{cd}\sigma^2\psi}{v} \frac{\rho_{e}''(\phi^{cd})}{v^{cd}\rho_{e}'(\phi^{cd})}\\
&-\frac{\pa_xv\sigma^2\psi}{v^2}v^{cd}\rho_{e}'(\phi^{cd})+\frac{1}{2}\frac{\pa_xv^{cd}\sigma^3\psi(v+v^{cd})}{vv^{cd}}\rho_{e}''(\phi^{cd})
-\frac{\pa_xv^{cd}\sigma\psi I_0(v+v^{cd})}{v(v^{cd})^2}\\
&+\frac{\pa_xv^{cd}\sigma\psi\pa_x\left(\frac{\pa_x\phi^{cd}}{v}\right)(v+v^{cd})}{v^2v^{cd}}
+\frac{\pa_xv^{cd}\sigma\psi\pa_x\left(\frac{\pa_x\sigma}{v}\right)(v+v^{cd})}{v^2v^{cd}},
\end{split}
\end{equation*}
which is further equal to
\begin{equation}\label{sumJ}
\begin{split}
&\underbrace{\frac{1}{2}\frac{\pa_x\psi\sigma^2}{v}v^{cd}\rho_{e}'(\phi^{cd})}_{I_7}
+\sigma^2\psi
v^{cd}\rho_{e}'(\phi^{cd})\pa_x\left(\frac{1}{v}-\frac{1}{v^{cd}}\right)+\frac{1}{2}\frac{\sigma^2\psi\varphi}{v^{2}}\pa_xv^{cd}
\left[\rho_{e}'(\phi^{cd})-\frac{v\rho_{e}(\phi^{cd})\rho_{e}''(\phi^{cd})}{v^{cd}\rho_{e}'(\phi^{cd})}\right]
\\
&-\frac{1}{2}\frac{\sigma^2\psi\pa_x\varphi}{v^{2}}v^{cd}\rho_{e}'(\phi^{cd})
+\frac{1}{2}\frac{\pa_xv^{cd}\sigma^3\psi(v+v^{cd})}{vv^{cd}}\rho_{e}''(\phi^{cd})
-\frac{\pa_xv^{cd}\sigma\psi I_0(v+v^{cd})}{v(v^{cd})^2}\\
&+\frac{\pa_xv^{cd}\sigma\psi\pa_x\left(\frac{\pa_x\phi^{cd}}{v}\right)(v+v^{cd})}{v^2v^{cd}}
+\frac{\pa_xv^{cd}\sigma\psi\pa_x\left(\frac{\pa_x\sigma}{v}\right)(v+v^{cd})}{v^2v^{cd}}-\pa_x\left(\frac{\psi\sigma^2}{2v}v^{cd}\rho_{e}'(\phi^{cd})\right).
\end{split}
\end{equation}
For $I_3$ and $I_7$,  it follows from $\eqref{NSPl}_1$, \eqref{pv} and
\eqref{pp} that
\begin{equation}\label{J3pJ7}
\begin{split}
I_3+I_7=&\frac{3\pa_t\varphi\sigma^2}{2v}v^{cd}\rho_{e}'(\phi^{cd})
+\frac{\pa_xu^{cd}\sigma^2}{v}v^{cd}\rho_{e}'(\phi^{cd})\\
=&\pa_t\left(\frac{3\varphi\sigma^2}{2v}v^{cd}\rho_{e}'(\phi^{cd})\right)
-\frac{3}{2}v^{cd}\rho_{e}'(\phi^{cd})\varphi\pa_{t}\left(\frac{\sigma^2}{v}\right)\\
&-\frac{3}{2}\pa_{t}\left(v^{cd}\rho_{e}'(\phi^{cd})\right)\frac{\varphi\sigma^2}{v}
+\frac{\pa_xu^{cd}\sigma^2}{v}v^{cd}\rho_{e}'(\phi^{cd}).
\end{split}
\end{equation}
Plugging \eqref{J3pJ7}, \eqref{sumJ}, \eqref{I1.3}, \eqref{I1.2},
\eqref{I1.1} and \eqref{I1} into \eqref{zs}, integrating the resulting identity with respect to $x$ over $\R_+$, and using $(\mathcal
{A}_{2})$,  we thus arrive at
\begin{equation}\label{zs2}
\begin{split}
\frac{d}{dt}&\int_{\R_{+}}\left(\frac{1}{2}\psi^{2}+R\theta^{cd}\Phi\left(\frac{v}{v^{cd}}\right)
+\frac{R}{\gamma-1}\theta^{cd}\Phi\left(\frac{\theta}{\theta^{cd}}\right)
+\frac{v^{cd}}{2}|\rho_{e}'(\phi^{cd})|\sigma^2+\frac{v^{cd}}{2v^2}(\pa_x\sigma)^2\right)dx
\\&+\frac{d}{dt}\int_{\R_{+}}\frac{\pa_x\sigma}{v}\sigma\pa_x\left(\frac{v^{cd}}{v}\right)dx
-\frac{3}{2}\frac{d}{dt}\int_{\R_{+}}v^{cd}\rho_{e}'(\phi^{cd})\frac{\varphi\sigma^2}{v}dx
-\frac{1}{3}\frac{d}{dt}\int_{\R_{+}}v^{cd}\rho_{e}''(\phi^{cd})\sigma^3dx
\\&+\mu\int_{\R_{+}}\frac{(\pa_{x}\psi)^{2}}{v}dx+\int_{\R_{+}}\frac{\kappa}{v\theta}(\pa_{x}\zeta)^{2}dx
\\=&-\int_{\R_{+}}Q_{1}dx
-\int_{\R_{+}}Q_{2}dx+\int_{\R_{+}}F\psi dx
+\int_{\R_{+}}\frac{\zeta}{\theta}G dx
+\widetilde{H}(0,t)+\sum\limits_{l=1}^{31}\CI_l,
\end{split}
\end{equation}
where
\begin{equation*}\label{78uiH}
\begin{split}
\widetilde{H}=\widetilde{H}(x,t)=&(p-p^{cd})\psi-\mu\frac{\psi\pa_x\psi}{v}
-\kappa\frac{\zeta}{\theta}\left(\frac{\pa_x\theta}{v}-\frac{\pa_x\theta^{cd}}{v^{cd}}\right)
-\frac{\sigma\psi}{v}-\pa_t\left(\frac{v^{cd}\sigma\pa_x\sigma}{v^{2}}\right)\\&+\frac{v^{cd}\pa_t\sigma\pa_x\sigma}{v^{2}}
+v^{cd}\pa_x\left(\frac{\pa_x\phi}{v}\right)\psi\sigma v^{-2}
+\frac{\psi\sigma^2}{2v}v^{cd}\rho_{e}'(\phi^{cd}),
\end{split}
\end{equation*}
and
\begin{eqnarray*}
\left\{\begin{array}{rll}
\begin{split}
&\CI_1=\int_{\R_{+}}\pa_x\left(\frac{v^{cd}}{v}\right)\frac{\pa_x\sigma}{v}\pa_t\sigma
dx,\ \ \
\CI_2=-\int_{\R_{+}}v^{cd}\pa_x\left(\frac{\pa_x\sigma}{v}\right)\sigma\pa_t(v^{-1})dx,\\
&\CI_3=-\int_{\R_{+}}\pa_t I_0\sigma dx,\ \ \
\CI_4=-\frac{1}{2}\int_{\R_{+}}\pa_t\left(\frac{v^{cd}}{v^2}\right)(\pa_x\sigma)^2dx,\\
&\CI_5=-\int_{\R_{+}}\frac{\pa_tv}{v}I_0\sigma dx,\ \ \
\CI_6=\int_{\R_{+}}\pa_t\left(v^{cd}\pa_x\left(\frac{\pa_x\phi^{cd}}{v}\right)\right)\sigma
v^{-1}dx,
\\
&\CI_7=\int_{\R_{+}}\frac{v^{cd}}{v^3}\pa^2_x\sigma\pa_x\sigma\psi dx,\ \ \
\CI_8=\int_{\R_{+}}\frac{v^{cd}}{v}\pa^2_x\sigma\sigma\pa_x(\psi v^{-2})dx,\\
&\CI_{9}=\int_{\R_{+}}v^{cd}\pa_x\sigma\pa_x(v^{-1})\pa_x(\sigma\psi
v^{-2})dx,\ \ \
\CI_{10}=\frac{1}{2}
\int_{\R_{+}}\pa_t(v^{cd}\rho_{e}'(\phi^{cd}))\sigma^2dx,\\
&\CI_{11}=\frac{1}{2}\int_{\R_{+}}\frac{\pa_tv}{v}v^{cd}\rho_{e}''(\phi^{cd})\sigma^3dx,\ \ \
\CI_{12}=\int_{\R_{+}}\frac{\pa_xv^{cd}I_0\psi}{(v^{cd})^3\rho_{e}'(\phi^{cd})}dx,\\
&\CI_{13}=\int_{\R_{+}}\frac{\pa_x v I_0\sigma\psi}{v^2}dx,\ \ \
\CI_{14}=\int_{\R_{+}}\frac{\pa_xI_0\sigma\psi}{v}dx,\\
&\CI_{15}=-\int_{\R_{+}}\pa_x\left(\frac{\pa_x\sigma}{v}\right)
\frac{\psi\pa_xv^{cd}}{v(v^{cd})^{2}\rho_{e}'(\phi^{cd})}dx,\ \ \
\CI_{16}=\int_{\R_{+}}v^{cd}\pa_x\left(\frac{\pa_x\phi^{cd}}{v}\right)\pa_x\left(\psi\sigma
v^{-2}\right)dx,\\
\end{split}
\end{array}\right.
\end{eqnarray*}
\begin{eqnarray*}
\left\{\begin{array}{rll}
\begin{split}
&\CI_{17}=-\int_{\R_{+}}\pa_x\left(\frac{\pa_x\phi^{cd}}{v}\right)\frac{\psi\pa_xv^{cd}}{v(v^{cd})^2\rho_{e}'(\phi^{cd})}
dx,\ \ \
\CI_{18}=\frac{1}{6}\int_{\R_{+}}\pa_{t}\left(v^{cd}\rho_{e}''(\phi^{cd})\right)\sigma^3dx,\\
&\CI_{19}=-\frac{1}{2}\int_{\R_{+}}v^{cd}\rho_{e}''(\phi^{cd})\frac{\pa_xv\sigma^3\psi}{v^2}dx,\
\ \ \
\CI_{20}=-\frac{1}{2}\int_{\R_{+}}\pa_{x}\left(v^{cd}\rho_{e}''(\phi^{cd})\right)\frac{\sigma^3\psi}{v}dx,\\
&\CI_{21}=-\int_{\R_{+}}v^{cd}\rho_{e}''(\phi^{cd})\frac{\pa_x\sigma\sigma^2\psi}{v}dx,\
\ \ \
\CI_{22}=\int_{\R_{+}}\sigma^2\psi v^{cd}\rho_{e}'(\phi^{cd})\pa_x\left(\frac{1}{v}-\frac{1}{v^{cd}}\right)dx,\\
&\CI_{23}=\frac{1}{2}\int_{\R_{+}}\frac{\sigma^2\psi\varphi}{v^{2}}\pa_xv^{cd}
\left[\rho_{e}'(\phi^{cd})-\frac{v\rho_{e}(\phi^{cd})\rho_{e}''(\phi^{cd})}{v^{cd}\rho_{e}'(\phi^{cd})}\right]dx,\\
&\CI_{24}=-\frac{1}{2}\int_{\R_{+}}\frac{\sigma^2\psi\pa_x\varphi}{v^{2}}v^{cd}\rho_{e}'(\phi^{cd})dx,\
\ \ \
\CI_{25}=\frac{1}{2}\int_{\R_{+}}\frac{\pa_xv^{cd}\sigma^3\psi(v+v^{cd})}{vv^{cd}}\rho_{e}''(\phi^{cd})dx,\\
&\CI_{26}=-\int_{\R_{+}}\frac{\pa_xv^{cd}\sigma\psi
I_0(v+v^{cd})}{v(v^{cd})^2}dx,\ \ \
\CI_{27}=\int_{\R_{+}}\frac{\pa_xv^{cd}\sigma\psi\pa_x\left(\frac{\pa_x\phi^{cd}}{v}\right)(v+v^{cd})}{v^2v^{cd}}dx,\\
&
\CI_{28}=\int_{\R_{+}}\frac{\pa_xv^{cd}\sigma\psi\pa_x\left(\frac{\pa_x\sigma}{v}\right)(v+v^{cd})}{v^2v^{cd}}dx,\
\ \ \
\CI_{29}=\int_{\R_{+}}\frac{\pa_xu^{cd}\sigma^2}{v}v^{cd}\rho_{e}'(\phi^{cd})dx,\\
&\CI_{30}=-\frac{3}{2}\int_{\R_{+}}\pa_{t}\left(v^{cd}\rho_{e}'(\phi^{cd})\right)\frac{\varphi\sigma^2}{v}dx,\
\ \
\CI_{31}=-\frac{3}{2}\int_{\R_{+}}v^{cd}\rho_{e}'(\phi^{cd})\varphi\pa_{t}\left(\frac{\sigma^2}{v}\right)dx.
\end{split}
\end{array}\right.
\end{eqnarray*}
We now turn to estimate the right hand side of \eqref{zs2} term by term.
It should be noted that the following Poincar\'{e} type inequalities play an important role in our computations:
\begin{equation}\label{p.ine}
\begin{split}
|\zeta(x,t)|\leq x^{\frac{1}{2}}\|\pa_x\zeta\|,\ \
|\varphi(x,t)|\leq |\varphi(0,t)|+x^{\frac{1}{2}}\|\pa_x\varphi\|, \
|\sigma(x,t)|\leq x^{\frac{1}{2}}\|\pa_x\sigma\|.
\end{split}
\end{equation}
From \eqref{p.ine} and Lemma \ref{es.tap},
one can further obtain
\begin{eqnarray}\label{p.ine2}
\left\{\begin{array}{rll}
&\dis{\int_{\R_{+}}}\varphi^{2}\left((\partial_{x}\theta^{cd})^{2}+|\partial^{2}_{x}\theta^{cd}|\right)dx
\leq
C\delta^2\|\varphi_{0}\|_{H^1}^{2}e^{-\frac{p_{-}}{\mu}t}+C\delta^2\|\partial_{x}\varphi\|^{2},\\[4mm]
&\dis{\int_{\R_{+}}}(\zeta^{2}+\sigma^{2})\left((\partial_{x}\theta^{cd})^{2}+|\partial^{2}_{x}\theta^{cd}|\right)dx
\leq
C\delta^2\|\partial_{x}[\zeta,\sigma]\|^{2},
\end{array}\right.
\end{eqnarray}
where the following Sobolev inequality is also used:
\begin{equation}\label{sob.}
|h(x)|\leq\sqrt{2}\|h\|^{1/2}\|\pa_xh\|^{1/2}\ \textrm{for}\ h(x)\in H^1({\R_+}).
\end{equation}
By applying \eqref{p.ine2}, Lemma \ref{pbd}, the {\it a priori}
assumption \eqref{aps}, Cauchy-Schwarz's inequality with $0<\eta<1$
and Sobolev's inequality \eqref{sob.}, we obtain the estimates for
terms involving $Q_1$ and $Q_2$ as follows:
\begin{equation}\label{Q1}
\begin{split}
\left|\int_{\R_{+}}Q_{1}dx\right|\leq C
\int_{\R}(\varphi^{2}+\zeta^{2})\left((\partial_{x}\theta^{cd})^{2}+|\partial^{2}_{x}\theta^{cd}|\right)dx
\leq
C\delta\|\varphi_{0}\|_{H^1}^{2}e^{-\frac{p_{-}}{\mu}t}+C\delta\|\partial_{x}[\varphi,\zeta]\|^{2},
\end{split}
\end{equation}
\begin{equation}\label{Q2}
\begin{split}
\left|\int_{\R_{+}}Q_{2}dx\right|\leq&
(C\epsilon_0+\eta)\|\pa_x\zeta\|^2+C_{\eta}
\int_{\R_{+}}(\varphi^{2}+\zeta^{2})(\partial_{x}\theta^{cd})^{2}dx
\\ \leq&
(C\epsilon_0+\eta)\|\pa_x\zeta\|^2+C_\eta\delta^2\|\varphi_{0}\|_{H^1}^{2}e^{-\frac{p_{-}}{\mu}t}+C_\eta\delta^2\|\partial_{x}[\varphi,\zeta]\|^{2}.
\end{split}
\end{equation}
For the terms involving $F$ and $G$, noticing that
$$
|\pa_tu^{cd}|=O(1)\de(1+t)^{-\frac{3}{2}} e^{-\frac{c_{1}x^{2}}{1+t}},\ |\pa^2_xu^{cd}|=|\pa_x\pa_tv^{cd}|=O(1)\de(1+t)^{-\frac{3}{2}} e^{-\frac{c_{1}x^{2}}{1+t}},\ \textrm{as}\ x\rightarrow+\infty,
$$
we get from Cauchy-Schwarz's inequality that
\begin{equation}\label{F.es}
\begin{split}
\left|\int_{\R_{+}}F\psi dx\right|\leq& \int_{\R_{+}}|\pa_tu^{cd}\psi| dx+C\int_{\R_{+}}\left|\pa^2_xu^{cd}\psi\right| dx
+C\int_{\R_{+}}\left|\pa_xu^{cd}\pa_xv^{cd}\psi\right| dx+
C\int_{\R_{+}}\left|\pa_xu^{cd}\pa_x\varphi\psi\right| dx\\
\leq&
C\de(1+t)^{-1-\al}\|\psi\|^2+C\de(1+t)^{-3/2+\al}+C\de\|\pa_x\varphi\|^2,
\end{split}
\end{equation}
where $0<\al<1/2$,
and
\begin{equation}\label{G.es}
\begin{split}
\left|\int_{\R_{+}}\frac{\zeta}{\theta}G dx\right|\leq C\|\zeta\|_{\infty}\|\pa_x
u\|^{2} \leq
C\epsilon_{0}\|\pa_x\psi\|^2+C\eps_0(1+t)^{-\frac{3}{2}}.
\end{split}
\end{equation}
We next compute the term $\widetilde{H}(0,t)$ arising from the boundary. Since $\zeta(0,t)=\si(0,t)=0$,
$\left|\widetilde{H}(0,t)\right|$ can be reduced to
\begin{equation*}\label{H.r}
\begin{split}
\left| \frac{R\ta_-\varphi(0,t)}{v(0,t)v_-}\psi(0,t)+\mu\left(\frac{\psi\pa_x\psi}{v}\right)(0,t)\right|,
\end{split}
\end{equation*}
which is further dominated by
\begin{equation}\label{H.es}
\begin{split}
 C&|\varphi(0,t)\psi(0,t)|+C|(\partial_{t}\varphi)(0,t)\psi(0,t)|
\\ \leq& C|\varphi_{0}(0)|e^{-\frac{p_{-}}{\mu}t}|\psi(0,t)|
\leq C\|\varphi_{0}(x)\|_{H^1}\|\psi\|^{1/2}\|\pa_x\psi\|^{1/2}e^{-\frac{p_{-}}{\mu}t}
\\ \leq& C\|\varphi_{0}(x)\|^{4/3}_{H^1}e^{-\frac{p_{-}}{\mu}t}+C\eps^2_0\|\pa_x\psi\|^2,
\end{split}
\end{equation}
according to Lemma \ref{pbd}, Sobolev's inequality \eqref{sob.} and Young's inequaity.

In order to estimate $\CI_l$ $(1\leq l\leq 31)$, we first calculate
\begin{multline}\label{tIR}
I_0\sim \sigma^3,\ \ \pa_t I_0=-v^{cd}\pa_t\phi\int_{\phi^{cd}}^{\phi}(\phi-\varrho)\rho_{e}'''(\varrho)
d\varrho
+\frac{1}{2}\sigma^2\pa_t\phi^{cd}v^{cd}\rho_{e}'''(\phi^{cd})-\pa_tv^{cd}
\int_{\phi^{cd}}^{\phi}\frac{(\phi-\varrho)^{2}}{2}\rho_{e}'''(\varrho)
d\varrho
\\ \sim \pa_t\phi \sigma^2+\pa_tv^{cd} \sigma^2+\pa_tv^{cd}\sigma^3= \pa_t\sigma \sigma^2
+2\pa_tv^{cd}\sigma^2+\pa_tv^{cd}\sigma^3,
\end{multline}
and similarly,
\begin{equation}\label{xIR}
\pa_x I_0\sim\pa_x\sigma \sigma^2
+2\pa_xv^{cd}\sigma^2+\pa_xv^{cd}\sigma^3.
\end{equation}
In addition, from \eqref{pp} and \eqref{p.BC}, it follows
\begin{equation}\label{ptsi}
\|\pa_t\sigma\|^2+\|\pa_t\pa_x\sigma\|^2 \leq
C\|\pa_x\psi\|^2+C\eps_0\left\|\left[\pa_x\varphi,\pa^2_x\psi,
\pa_x\sigma,\pa^2_x\sigma\right]\right\|^2 +C\delta(1+t)^{-\frac{3}{2}}.
\end{equation}
For the sake of completeness, the proof of \eqref{ptsi} is given in the appendix.

With \eqref{tIR}, \eqref{xIR} and \eqref{ptsi} in hand, we now
employ \eqref{p.ine2}, Cauchy-Schwarz's inequality with $0<\eta<1,$
Sobolev's inequality and Lemma \ref{es.tap} repeatedly to present
the following estimates:
\begin{equation*}\label{R1}
|\CI_1|\leq
C\int_{\R_{+}}\left|\pa_x[v^{cd},v]\pa_x\sigma\pa_t\sigma\right|dx \leq
C \eps_0\left\|\left[\pa_x\sigma,\pa^{2}_x\sigma,\pa_t\sigma\right]\right\|^2,
\end{equation*}
\begin{equation*}\label{R2}
\begin{split}
|\CI_2|\leq C&\int_{\R_{+}}\left|\pa^2_x\sigma\sigma\pa_x[\psi,
u^{cd}]\right|dx
+C\int_{\R_{+}}\left|\pa_x\sigma\sigma\pa_x[\varphi,v^{cd}]\pa_x[\psi, u^{cd}]\right|dx
\leq C
\eps_0\left\|\pa_x\left[\varphi,\psi,\sigma,\pa_x\sigma\right]\right\|^2,
\end{split}
\end{equation*}
\begin{equation*}\label{R3}
\begin{split}
|\CI_3|&+|\CI_5|+|\CI_{11}|+|\CI_{18}|\\
\leq&C\int_{\R_{+}}\left|\partial_{x}u^{cd}\sigma^{3}\right|dx
+C\int_{\R_{+}}\left|\sigma^{3}[\partial_{t}\sigma,\partial_{x}\psi]\right|
dx
+C\int_{\R_{+}}\left|\pa_x\psi\sigma^4\right|dx
+C\int_{\R}\left|\pa_xu^{cd}\sigma^4\right|dx\\
\leq& C\eps_0\left\|\left[\pa_t\sigma,\pa_x\sigma,\pa_x\psi\right]\right\|^2,
\end{split}
\end{equation*}
\begin{equation*}\label{R4}
\begin{split}
|\CI_4|\leq C\int_{\R_{+}}|\pa_t[v^{cd},v]||\pa_x\sigma|^2dx \leq
C\eps_0\left\|\pa_x[\sigma,\pa_x\sigma]\right\|^2,
\end{split}
\end{equation*}
\begin{equation*}\label{R7}
\begin{split}
|\CI_6|\leq&\int_{\R_{+}}\left|\pa_t\pa_xv^{cd}\left(\frac{\pa_x\phi^{cd}}{v}\right)\sigma
v^{-1}\right|dx
+\int_{\R_{+}}\left|\pa_tv^{cd}\left(\frac{\pa_x\phi^{cd}}{v}\right)\pa_x\sigma
v^{-1}\right|dx
\\&+\int_{\R_{+}}\left|\pa_tv^{cd}\left(\frac{\pa_x\phi^{cd}}{v}\right)\sigma \pa_x (v^{-1})\right|dx
+\int_{\R_{+}}\left|\pa_xv^{cd}\pa_t\left(\frac{\pa_x\phi^{cd}}{v}\right)\sigma
v^{-1}\right|dx
\\&+\int_{\R_{+}}\left|v^{cd}\pa_t\left(\frac{\pa_x\phi^{cd}}{v}\right)\pa_x\sigma v^{-1}\right|dx
+\int_{\R_{+}}\left|v^{cd}\pa_t\left(\frac{\pa_x\phi^{cd}}{v}\right)\sigma \pa_x (v^{-1})\right|dx\\
\leq &C
\delta(1+t)^{-\frac{3}{2}}+C\eps_0\|\pa_x[\varphi,\psi,\pa_x\sigma]\|^2,
\end{split}
\end{equation*}
\begin{equation*}\label{R8}
\begin{split}
|\CI_7|\leq C\|\psi\|_{L^\infty} (\|\pa_x\sigma\|^2+\|\pa_x^2\sigma\|^2)
\leq C \eps_0\|\pa_x[\sigma,\pa_x\sigma]\|^2,
\end{split}
\end{equation*}
\begin{equation*}\label{R9}
\begin{split}
|\CI_8|\leq &C\int_{\R_{+}}|\pa^2_x\sigma\sigma\pa_x\psi|dx+
C\int_{\R_{+}}|\pa^2_x\sigma\sigma\psi\pa_x\varphi|dx+C\int_{\R_{+}}|\pa^2_x\sigma\sigma\psi\pa_xv^{cd}|dx\\
\leq &C \eps_0\|\pa_x[\psi, \varphi,\pa_x\sigma]\|^2+C\epsilon_{0}
\int_{\R_{+}}(\partial_{x}\theta^{cd})^{2}\sigma^{2}dx
\leq C \eps_0\|\pa_x[\psi, \varphi,\si,\pa_x\sigma]\|^2,
\end{split}
\end{equation*}
\begin{equation*}\label{R10}
\begin{split}
|\CI_{9}|\leq& C\int_{\R_{+}}\left|\pa_x\sigma
\pa_xv\pa_x\sigma\psi\right|dx
+C\int_{\R_{+}}\left|\pa_x\sigma\pa_xv\sigma\pa_x\psi\right|dx+
C\int_{\R_{+}}\left|\pa_x\sigma\pa_xv\sigma\psi\pa_xv\right|dx
\\
\leq& C\|\pa_x\si\|_{H^1}\|\psi\|_{H^1}\|\pa_x\sigma\|\|\pa_xv\|
+C\|\pa_x\si\|_{H^1}\|\si\|_{H^1}\|\pa_x\psi\|\|\pa_xv\|
+C\|\pa_x\si\|_{H^1}\|\si\|_{H^1}\|\psi\|_{H^1}\|\pa_x\varphi\|^2
\\&+C\|\pa_x\si\|\|\pa_x\varphi\|\|\si\|_{H^1}\|\psi\|_{H^1}\|\pa_xv^{cd}\|_{H^1}
+C\|\psi\|_{H^1}\left(\|\pa_x\si\|^2+\|\si(\pa_xv^{cd})^2\|^2\right)
\\ \leq&
C\eps_0\|\pa_x[\sigma,\pa_x\sigma, \varphi, \psi]\|^2,
\end{split}
\end{equation*}
\begin{equation*}\label{R101}
\begin{split}
|\CI_{10}|+|\CI_{29}|+|\CI_{30}|\leq C&\int_{\R_{+}}\left|
\pa_xu^{cd}\sigma^{2}\right|dx \leq
C\int_{\R_{+}}\left((\partial_{x}\theta^{cd})^{2}+|\partial^{2}_{x}\theta^{cd}|\right)\sigma^{2}dx\leq C\de\|\pa_x\si\|^2,
\end{split}
\end{equation*}
\begin{equation*}\label{R101}
\begin{split}
|\CI_{12}|&+|\CI_{13}|+|\CI_{14}|+|\CI_{19}|+|\CI_{20}|+|\CI_{25}|+|\CI_{26}|\\
\leq& C\int_{\R_{+}}\left|
\pa_xv^{cd}\sigma^{3}\psi\right|dx+C\int_{\R_{+}}\left|
\pa_x[\varphi,\sigma]\sigma^{3}\psi\right|dx\\
\leq& C\eps_0\|\pa_x[\sigma,\varphi]\|^2+C\eps_0
\int_{\R_{+}}(\partial_{x}\theta^{cd})^{2}\sigma^{2}dx
\leq C\eps_0\|\pa_x[\sigma,\varphi]\|^2,
\end{split}
\end{equation*}
\begin{equation*}\label{R16}
\begin{split}
|\CI_{15}|
\leq&\eta\int_{\R_{+}}\left|\pa^2_x\sigma\right|^2dx+C_\eta\int_{\R_{+}}\psi^{2}(\pa_x\theta^{cd})^{2}dx
+C\int_{\R_{+}}\left|\pa_x\sigma\pa_x\varphi\right|^2dx+C\int_{\R_{+}}\left|\pa_x\sigma\pa_xv^{cd}\right|^2dx
\\ \leq& (C\eps_0+\eta)\|\pa_x[\sigma,\pa_x\si]\|^2+C_\eta\int_{\R_{+}}\psi^{2}(\pa_x\theta^{cd})^{2}dx,
\end{split}
\end{equation*}
\begin{equation*}\label{R15}
\begin{split}
|\CI_{16}|\leq&C\int_{\R_{+}}\left|\pa^2_xv^{cd}\pa_x\sigma\psi\right|dx
+C\int_{\R_{+}}\left|\pa^2_xv^{cd}\sigma\pa_x\psi\right|dx\\
&+C\int_{\R_{+}}\left|\pa^2_xv^{cd}[\pa_x\varphi,\pa_xv^{cd}]\sigma\psi\right|dx
+C\int_{\R_{+}}\left|\pa_xv^{cd}[\pa_x\varphi,\pa_xv^{cd}]\pa_x\sigma\psi\right|dx
\\&+C\int_{\R_{+}}\left|\pa_xv^{cd}[\pa_x\varphi,\pa_xv^{cd}]\sigma\pa_x\psi\right|dx+
C\int_{\R_{+}}\left|\pa_xv^{cd}[\pa_x\varphi,\pa_xv^{cd}]^2\sigma\psi\right|dx\\
\leq& C\eps_0\|\pa_x[\sigma,\varphi,
\psi]\|^2+C\delta(1+t)^{-\frac{3}{2}},
\end{split}
\end{equation*}
\begin{equation*}\label{R18}
\begin{split}
|\CI_{17}|+|\CI_{27}|\leq&C\int_{\R_{+}}\left|(\pa_xv^{cd})^{2}\pa_xv\psi\right|dx
+C\int_{\R_{+}}\left|\pa^2_xv^{cd}\pa_xv^{cd}\psi\right|dx
\\ \leq&
C\eps_0\|\pa_x\varphi\|^2+C\int_{\R_{+}}\psi^{2}(\pa_x\theta^{cd})^{2}dx+C\de^2(1+t)^{-\frac{3}{2}},
\end{split}
\end{equation*}
\begin{equation*}\label{R18}
\begin{split}
|\CI_{21}|+|\CI_{22}|+|\CI_{23}|+|\CI_{24}|\leq&C\int_{\R_{+}}\left|\pa_x[\sigma,\varphi]\sigma^2\psi\right|dx+C\int_{\R_{+}}\left|\sigma^2\psi\varphi\pa_xv^{cd}\right|dx\\
\leq& C\eps_0\|\pa_x[\sigma,\varphi]\|^2+C\eps_0
\int_{\R_{+}}(\partial_{x}\theta^{cd})^{2}\varphi^{2}dx
\\ \leq& C\eps_0\|\pa_x[\sigma,\varphi]\|^2+
C\delta^2\|\varphi_{0}\|_{H^1}^{2}e^{-\frac{p_{-}}{\mu}t},
\end{split}
\end{equation*}
\begin{equation*}\label{R16}
\begin{split}
|\CI_{28}|\leq&C\int_{\R_{+}}\left|\pa^{2}_x\sigma\pa_xv^{cd}\sigma\psi\right|dx
+C\int_{\R_{+}}\left|\pa_x\sigma\pa_xv\pa_xv^{cd}\psi\sigma\right|dx\\
\leq& C\eps_0\|\pa_x[\sigma,\pa_x\sigma,\varphi]\|^2+C\eps_0
\int_{\R_{+}}(\partial_{x}\theta^{cd})^{2}\sigma^{2}dx\leq C\eps_0\|\pa_x[\sigma,\pa_x\sigma,\varphi]\|^2.
\end{split}
\end{equation*}
For the last term $\CI_{31}$, applying \eqref{taylor} again, one can see that
\begin{equation*}\label{I22}
\begin{split}
\CI_{31}
=&-\frac{3}{2}\int_{\R_{+}}v^{cd}\rho_{e}'(\phi^{cd})\pa_t\left(\frac{\sigma^2}{v}\right)
\left(-v^{cd}\pa_x\left(\frac{\pa_x\sigma}{v}\right)
+v\left(1-v^{cd}\rho_{e}(\sigma+\phi^{cd})\right)
-v^{cd}\pa_x\left(\frac{\pa_x\phi^{cd}}{v}\right)\right)dx,
\end{split}
\end{equation*}
which implies
\begin{equation*}\label{I22}
\begin{split}
\left|\CI_{31}
-\frac{d}{dt}\int_{\R_{+}}\left(v^{cd}\rho_{e}'(\phi^{cd})\right)^{2}\sigma^3dx\right|
\leq C\eps_0\left\|\left[\pa_x\psi,\pa_x\varphi,\pa_t\sigma,
\pa_x\sigma,\pa_x^2\sigma\right]\right\|^2 +C\delta(1+t)^{-\frac{3}{2}}.
\end{split}
\end{equation*}
Let us now define $\si_0(x)=\si(x,0)=\phi(x,0)-\phi^{cd}(x,0)$.
From  the Poisson equation \eqref{pp}, it follows that for any $t\geq0$
\begin{eqnarray*}
\|\si(t)\|_{H^1}^2&\leq &C\|\varphi(t)\|^2 +C\left\|\pa_x^2 \phi^{cd}(t)\right\|^2+C\left\|\left(\pa_x \phi^{cd}\right)^2(t)\right\|^2
\leq C\|\varphi(t)\|^2 +C\de^2,
\end{eqnarray*}
and hence in particular,
\begin{equation}\label{si.ides}
\|\sigma_0\|_{H^1}^2\leq C\|\varphi_0\|^2 +C\de^2.
\end{equation}

We now conclude from \eqref{zs2}, \eqref{Q1}, \eqref{Q2}, \eqref{F.es}, \eqref{G.es}, \eqref{H.es}, \eqref{ptsi}, \eqref{si.ides} and the above
estimates on $\CI_{l}$ $(1\leq l\leq 31)$ that
\begin{equation}\label{zeng.p3}
\begin{split}
\left\|[\psi,\varphi,\zeta]\right\|^2&+\left\|\sigma\right\|^2_{H^1}+\eps_0\left\|\partial_{x}\varphi\right\|^2
+\int_{0}^{T}\|\pa_x\left[\psi,\zeta\right]\|^2dt\\
\leq& C\left\|[\psi_{0},\zeta_{0}]\right\|^2+C\left\|\varphi_{0}\right\|^{4/3}_{H^1}
+(C\eps_0+\eta)\int_{0}^{T}\|\pa_x[\varphi,\pa_x\psi,\sigma,\pa_x\sigma]\|^2dt
\\&+C_\eta\int_{0}^{T}\int_{\R_{+}}\psi^{2}(\pa_x\theta^{cd})^{2}dxdt
+C\delta,
\end{split}
\end{equation}
for suitably small $\eps_0>0$, $\de>0$  and $\eta>0$.

{\bf Step 2.} {\it Dissipation of
$\pa_x[\varphi,\sigma,\pa_x\sigma]$.}

We first differentiate
 \eqref{pv} with respect to $x$,
to obtain
\begin{equation}\label{dpp}
\pa_t\pa_x\varphi-\pa^2_x\psi=0.
\end{equation}
Then multiplying \eqref{pp}, \eqref{pu} and \eqref{dpp} by
$\pa^{2}_x\sigma$, $-v\pa_x\varphi$ and $\mu\pa_x\varphi$, respectively, and
integrating the resulting equalities with respect to $x$ over $\R_{+}$, one has
\begin{equation}\label{pp.ip1}
\begin{split}
\int_{\R_{+}}&\frac{v^{cd}}{v}(\pa^2_x\sigma)^{2}dx
+\int_{\R_{+}}vv^{cd}|\rho_{e}'(\phi^{cd})|(\pa_x\sigma)^{2}dx\\
=&\int_{\R_{+}}\frac{v^{cd}}{v^2}\pa_xv\pa_x\sigma\pa^2_x\sigma
dx-\int_{\R_{+}}\pa_xv\left[1-v^{cd}\rho_{e}(\sigma+\phi^{cd})\right]\pa_x\sigma
dx
+\int_{\R_{+}}v\pa_x[v^{cd}\rho_{e}'(\phi^{cd})]\sigma\pa_x\sigma dx\\
&+\int_{\R_{+}}v\pa_x\left[\frac{v^{cd}\rho_{e}''(\phi^{cd})}{2}\sigma^{2}\right]\pa_x\sigma
dx+\int_{\R_{+}}\pa_x\varphi\pa_x\sigma
dx+\varphi(0,t)\pa_x\si(0,t)-\int_{\R_{+}}v\pa_xI_0\pa_x\sigma
dx
\\&
-\int_{\R_{+}}v^{cd}\pa_x\left(\frac{\pa_x\phi^{cd}}{v}\right)\pa^{2}_x\sigma
dx,
\end{split}
\end{equation}
\begin{equation}\label{pu.ip1}
\begin{split}
-&\int_{\R_{+}}\pa_t \psi v\pa_x\varphi dx-\int_{\R_{+}}(\pa_x
p-\pa_xp^{cd})v\pa_x\varphi dx+\int_{\R_{+}}\pa_x\varphi\pa_x\sigma dx
+\int_{\R_{+}}\left(\frac{\pa_x\phi^{cd}}{v}-\frac{\pa_x\phi^{cd}}{v^{cd}}\right)v\pa_x\varphi dx\\
&=-\int_{\R_{+}}\mu\pa^2_x \psi\pa_x\varphi dx
-\int_{\R_{+}}\mu\pa^2_x u^{cd}\pa_x\varphi
dx-\mu\int_{\R_{+}}\pa_x(v^{-1})\pa_x uv\pa_x\varphi
dx+\int_{\R_{+}}\pa_tu^{cd}v\pa_x\varphi dx,
\end{split}
\end{equation}
and
\begin{equation}\label{pv.ip1}
\int_{\R_{+}}\mu\left(\pa_t\pa_x\varphi-\pa^2_x\psi\right)\pa_x\varphi
dx=0.
\end{equation}
The summation of \eqref{pp.ip1}, \eqref{pu.ip1} and
\eqref{pv.ip1} further implies
\begin{equation}\label{sum.ip1}
\begin{split}
-\frac{d}{dt}&\int_{\R_{+}}\psi v\pa_x\varphi dx
+\frac{\mu}{2}\frac{d}{dt}\int_{\R_{+}}(\pa_x\varphi)^{2}dx+\int_{\R_{+}}p^{cd}(\pa_x\varphi)^{2}dx
\\&+\int_{\R_{+}}\frac{v^{cd}}{v}(\pa^2_x\sigma)^{2}dx
+\int_{\R_{+}}vv^{cd}|\rho_{e}'(\phi^{cd})|(\pa_x\sigma)^{2}dx\\
=&\underbrace{-\int_{\R_{+}}\psi\pa_tv\pa_x\varphi dx}_{J_1}\underbrace{- \int_{\R_{+}}\psi
v\pa_t\pa_x\varphi dx}_{J_2}\underbrace{+\int_{\R_{+}}R\pa_x
\left[\frac{\zeta}{v}\right]v\pa_x\varphi dx}_{J_3}
\underbrace{-\int_{\R_{+}}R\varphi\pa_x
\left[\frac{\ta^{cd}}{vv^{cd}}\right]v\pa_x\varphi dx}_{J_4}\\
&\underbrace{-\int_{\R_{+}}\left(\frac{\pa_x\phi^{cd}}{v}-\frac{\pa_x\phi^{cd}}{v^{cd}}\right)v\pa_x\varphi
dx}_{J_5} \underbrace{-\int_{\R_{+}}\mu\pa^2_x u^{cd}\pa_x\varphi
dx}_{J_6}\underbrace{-\mu\int_{\R_{+}}\pa_x(v^{-1})\pa_x uv\pa_x\varphi dx}_{J_7}\\
&\underbrace{+\int_{\R_{+}}\frac{v^{cd}}{v^2}\pa_xv\pa_x\sigma\pa^2_x\sigma
dx}_{J_8}\underbrace{-\int_{\R_{+}}\pa_xv\left[1-v^{cd}\rho_{e}(\sigma+\phi^{cd})\right]\pa_x\sigma
dx}_{J_9}\underbrace{+\varphi(0,t)\pa_x\si(0,t)}_{J_{10}}\\& \underbrace{-\int_{\R_{+}}v\pa_xI_0\pa_x\sigma dx}_{J_{11}}
\underbrace{+\int_{\R_{+}}v\pa_x\left[v^{cd}\rho_{e}'(\phi^{cd})\right]\sigma\pa_x\sigma
dx}_{J_{12}}\underbrace{+\int_{\R_{+}}v\pa_x\left[\frac{v^{cd}\rho_{e}''(\phi^{cd})}{2}\sigma^{2}\right]\pa_x\sigma
dx}_{J_{13}}\\&\underbrace{+\int_{\R_{+}}\pa_tu^{cd}v\pa_x\varphi
dx}_{J_{14}}\underbrace{-\int_{\R_{+}}v^{cd}\pa_x\left(\frac{\pa_x\phi^{cd}}{v}\right)\pa^2_x\sigma
dx}_{J_{15}}.
\end{split}
\end{equation}
We now turn to compute $J_l$ $(1\leq l\leq 15)$ term by term. For brevity, we directly
give the following computations:
\begin{equation*}
\begin{split}
|J_{1}|\leq&C\int_{\R_{+}}\left|\psi\pa_x\psi\pa_x\varphi\right|
dx+C\int_{\R_{+}}\left|\psi\pa_xu^{cd}\pa_x\varphi\right|dx\leq
C\eps_0\|\pa_x[\psi,\varphi]\|^2+C\delta(1+t)^{-3/2},
\end{split}
\end{equation*}
\begin{equation*}
\begin{split}
|J_{2}|\leq&\left|\psi(0,t)v(0,t)(\pa_t\varphi)(0,t)\right|+\left|\int_{\R_{+}}v(\pa_x\psi)^{2}dx\right|+
\left|\int_{\R_{+}}\psi\pa_xv\pa_x\psi dx\right|\\
\leq&C\|\varphi_{0}\|_{H^1}^{\frac{4}{3}}e^{-\frac{p_{-}}{\mu}t}+
C\eps_0\|\pa_x[\psi,\varphi]\|^2+C\|\pa_x\psi\|^2
+C\int_{\R_{+}}\psi^{2}(\pa_x\theta^{cd})^{2}dx,
\end{split}
\end{equation*}
\begin{equation*}
\begin{split}
|J_{3}|+|J_{4}|+|J_{5}| &\leq
(\eta+C\eps_0)\|\pa_x\varphi\|^2+C_{\eta}\|\pa_x\zeta\|^2
+C_{\eta}\int_{\R_{+}}(\varphi^{2}+\zeta^{2})(\pa_x\theta^{cd})^{2}dx\\
&\leq(\eta+C\eps_0)\|\partial_{x}[\varphi,\zeta]\|^2+C_{\eta}\|\pa_x\zeta\|^2
+C\delta\|\varphi_{0}\|_{H^1}^{2}e^{-\frac{p_{-}}{\mu}t},
\end{split}
\end{equation*}
\begin{equation*}
\begin{split}
|J_{6}| \leq C\delta\|\pa_x\varphi\|^2+C\delta(1+t)^{-5/2},
\end{split}
\end{equation*}
\begin{equation*}
\begin{split}
|J_{7}|\leq& C\int_{\R_{+}}(\pa_x\varphi)^{2}|\pa_x\psi| dx
+C\int_{\R_{+}}(\pa_x\varphi)^{2}|\pa_xu^{cd}|
dx+C\int_{\R_{+}}|\pa_xv^{cd}\pa_xu^{cd}\pa_x\varphi| dx+
\int_{\R_{+}}|\pa_xv^{cd}\pa_x\psi\pa_x\varphi|dx\\
\leq&C\eps_0\|\pa_x[\psi,\pa_x\psi,\varphi]\|^2
+C\delta(1+t)^{-2},
\end{split}
\end{equation*}
\begin{equation*}
|J_{8}|+|J_{9}| \leq C\eps_0\|\pa_x[\varphi,\pa_x\sigma,\sigma]\|^2,
\end{equation*}
\begin{equation*}
\begin{split}
|J_{10}|\leq
C\eps_0\|\pa_x\sigma\|_{H^1}^2+C\|\varphi_{0}\|_{H^1}^{\frac{4}{3}}e^{-\frac{4p_{-}}{3\mu}t},
\end{split}
\end{equation*}
\begin{equation*}
\begin{split}
|J_{11}|+|J_{12}|+|J_{13}|+|J_{14}|+|J_{15}|\leq
C\eps_0\|\pa_x[\varphi,\sigma,\pa_x\si]\|^2+C\delta(1+t)^{-3/2}.
\end{split}
\end{equation*}
Substituting the above estimations for $J_l$ $(1\leq l\leq 15)$ into
\eqref{sum.ip1}, letting $\eta>0$ be suitably small and combing \eqref{zeng.p3}, we obtain
\begin{equation}\label{sum.ip2}
\begin{split}
\left\|[\psi,\varphi,\zeta]\right\|^2&+\left\|\sigma\right\|^2_{H^1}+
\|\pa_x\varphi\|^2
+\int_{0}^{T}\|\pa_x\sigma\|^2_{H^1}dt
+\int_{0}^{T}\|\pa_x\left[\varphi,\psi,\zeta\right]\|^2dt
\\
\leq&C\eps_0\int_{0}^{T}\|\pa^{2}_x\psi\|^2dt+C\delta+
C\left\|[\psi_{0},\zeta_{0}]\right\|^2+C\left\|\varphi_{0}\right\|^{4/3}_{H^1}
+C\int_{0}^{T}\int_{\R_{+}}\psi^{2}(\pa_x\theta^{cd})^{2}dxdt.
\end{split}
\end{equation}

{\bf Step 3.} {\it Higher order energy estimates.}

 Multiplying
\eqref{pu} by $-\pa^{2}_x\psi$, and integrating the resultant
equality with respect to $x$ over $\R_{+}$, one has
\begin{equation}\label{sum.ip3}
\begin{split}
\frac{1}{2}\frac{d}{dt}&\int_{\R_{+}}(\pa_x\psi)^{2}dx+\mu\int_{\R_{+}}\frac{(\pa^{2}_x\psi)^{2}}{v}dx
\\
=&\underbrace{-\int_{\R_{+}}\left(\frac{\pa_x\phi}{v}-\frac{\pa_x\phi^{cd}}{v^{cd}}\right)\pa^{2}_x\psi
dx}_{J_{16}}\underbrace{+\int_{\R_{+}}\pa_x(p-p^{cd})\pa^{2}_x\psi
dx}_{J_{17}}\underbrace{+\mu\int_{\R_{+}}\frac{\pa_x\psi\pa_x\varphi}{v^{2}}\pa^{2}_x\psi
dx}_{J_{18}}\\&\underbrace{+\mu\int_{\R_{+}}\frac{\pa_x\psi\pa_xv^{cd}}{v^{2}}\pa^{2}_x\psi
dx}_{J_{19}}\underbrace{-\int_{\R_{+}}F\pa^{2}_x\psi
dx}_{J_{20}}\underbrace{-(\pa_t\psi\pa_t\varphi)(0,t)}_{J_{21}}.
\end{split}
\end{equation}
To obtain the estimates for $J_l$ $(16\leq l\leq 21)$, we
use Cauchy-Schwarz's inequality with $0<\eta<1$, Sobolev's inequlity \eqref{sob.} and
\eqref{p.ine2} repeatedly to perform the calculations as follows:
\begin{equation*}\label{3.2}
\begin{split}
|J_{16}|\leq&
C\int_{\R_{+}}\left|\pa_xv^{cd}\varphi\pa^{2}_x\psi\right|dx+C\int_{\R_{+}}\left|\pa_x\sigma\pa^{2}_x\psi\right|dx\\
\leq&
(C\delta+\eta)\|\pa_x[\varphi,\pa_x\psi]\|^{2}+C_{\eta}\|\pa_x\sigma\|^{2}+C\delta\|\varphi_{0}\|_{H^1}^{2}e^{-\frac{p_{-}}{\mu}t},
\end{split}
\end{equation*}
\begin{equation*}\label{3.3}
\begin{split}
|J_{17}|\leq&
C\int_{\R_{+}}\left|\pa_x[\zeta,\varphi]\pa^{2}_x\psi\right|dx
+C\int_{\R_{+}}\left|[\zeta,\varphi]\pa_x[\varphi,v^{cd}]\pa^{2}_x\psi\right|dx\\
\leq&
(C\eps_0+\eta)\|\pa_x[\varphi,\pa_x\psi]\|^{2}+C_{\eta}\|\pa_x[\zeta,\varphi]\|^{2}+C\delta\|\varphi_{0}\|_{H^1}^{2}e^{-\frac{p_{-}}{\mu}t},
\end{split}
\end{equation*}
\begin{equation*}\label{3.4}
\begin{split}
|J_{18}|+|J_{19}|\leq C\eps_0\|\pa_x[\psi,\pa_x\psi]\|^{2},
\end{split}
\end{equation*}

\begin{equation*}\label{3.6}
\begin{split}
|J_{20}|\leq&
C\int_{\R_{+}}\left|\pa^{2}_xu^{cd}\pa^{2}_x\psi\right|dx+
C\int_{\R_{+}}\left|\pa_xu^{cd}\pa_xv\pa^{2}_x\psi\right|dx+C\int_{\R_{+}}\left|\pa_tu^{cd}\pa^{2}_x\psi\right|dx\\
\leq&
C\delta\|\pa_x[\varphi,\pa_x\psi]\|^{2}+C\delta(1+t)^{-\frac{5}{2}}.
\end{split}
\end{equation*}
For the last term $J_{21}$, in light of Lemma \ref{pbd}, we have
\begin{equation}\label{J21}
\begin{split}
J_{21}=-(\pa_t\psi\pa_t\varphi)(0,t)=-\pa_t[(\psi\pa_t\varphi)(0,t)]+\varphi_{0}(0)\frac{(p_{-})^{2}}{\mu^{2}}\psi(0,t)
e^{-\frac{p_{-}}{\mu}t},
\end{split}
\end{equation}
furthermore, it follows that
\begin{equation}\label{J211}
\begin{split}
|(\psi\pa_t\varphi)(0,T)|\leq C\varphi_{0}(0)\psi(0,T)
e^{-\frac{p_{-}}{\mu}T}\leq C\eps_0\|\varphi_{0}\|_{H^1}e^{-\frac{p_{-}}{\mu}T},
\end{split}
\end{equation}
and
\begin{equation}\label{J212}
\begin{split}
|(\psi\pa_t\varphi)(0,0)|\leq C|\psi_{0}(0)\varphi_0(0)|\leq C(\|\psi_0\|^2_{H^1}+\|\varphi_{0}\|^2_{H^1}).
\end{split}
\end{equation}
By virtue of \eqref{J21}, \eqref{J211} and \eqref{J212} and carrying out the similar calculations as \eqref{H.es}, we thereby obtain
\begin{equation*}\label{3.6}
\begin{split}
\left|\int_{0}^{T}J_{21}dt\right| \leq C\eps_0\int_{0}^{T}\|\pa_x\psi\|^2dt+C\|\varphi_{0}\|_{H^1}^{\frac{4}{3}}+C\|\psi_0\|^2_{H^1}+C\eps_0\|\varphi_{0}\|_{H^1}.
\end{split}
\end{equation*}

Plug the above estimations for $J_l$ $(16\leq l\leq 21)$ into
\eqref{sum.ip3}, and recall \eqref{sum.ip2} and \eqref{zeng.p3}, then
choose $\eps_0>0$, $\delta>0$ and $\eta>0$ suitably small, to derive
\begin{equation}\label{sum.ip4}
\begin{split}
\left\|[\psi,\varphi,\zeta]\right\|^2&+\left\|\sigma\right\|^2_{H^1}
+\|\pa_x\varphi\|^2+\|\pa_x\psi\|^2\\
&+\int_{0}^{T}\|\pa_x\sigma\|^2_{H^1}dt
+\int_{0}^{T}\|\pa_x\left[\varphi,\psi,\zeta\right]\|^2dt+\int_{0}^{T}\|\pa^{2}_x\psi\|^{2}dt\\
\leq& C\delta+
C\left\|\zeta_{0}\right\|^2+C\left\|[\psi_{0},\varphi_{0}]\right\|^{4/3}_{H^1}
+C\int_{0}^{T}\int_{\R_{+}}\psi^{2}(\pa_x\theta^{cd})^{2}dxdt.
\end{split}
\end{equation}
Similarly, multiplying \eqref{pta} by $-\pa^{2}_x\zeta$, and
integrating the resulting equality over $\R_{+}$, we obtain
\begin{equation}\label{sum.ip5}
\begin{split}
\frac{R}{2(\gamma-1)}&\frac{d}{dt}\int_{\R_{+}}(\pa_x\zeta)^{2}dx+\kappa\int_{\R_{+}}\frac{(\pa^{2}_x\zeta)^{2}}{v}dx
\\
=&\int_{\R_{+}}(p\pa_xu-p^{cd}\pa_xu^{cd})\pa^{2}_x\zeta
dx+\kappa\int_{\R_{+}}\frac{\pa_x\zeta\pa_x\varphi}{v^{2}}\pa^{2}_x\zeta
dx+\kappa\int_{\R_{+}}\frac{\pa_x\zeta\pa_xv^{cd}}{v^{2}}\pa^{2}_x\zeta
dx\\&+\kappa\int_{\R_{+}}\partial_{x}\left(\frac{\varphi\partial_{x}\theta^{cd}}{vv^{cd}}\right)
\pa^{2}_x\zeta dx-\int_{\R_{+}}G\pa^{2}_x\zeta
dx,
\end{split}
\end{equation}
where we have used boundary condition $\zeta(0,t)=0.$
The right hand side of \eqref{sum.ip5} can be handled as $J_{l}$ $(16\leq l\leq 21)$, the details of which we omit, therefore one can get from \eqref{sum.ip5} and \eqref{sum.ip4} that
\begin{equation}\label{sum.ip6}
\begin{split}
\left\|[\psi,\varphi,\zeta]\right\|_{H^1}^2&+\left\|\sigma\right\|^2_{H^1}
+\int_{0}^{T}\|\pa_x\sigma\|^2_{H^1}dt
+\int_{0}^{T}\|\pa_x\left[\varphi,\psi,\zeta\right]\|^2dt+\int_{0}^{T}\|\pa^{2}_x[\psi,\zeta]\|^{2}dt\\
\leq& C\delta+
C\left\|[\varphi_{0},\psi_{0},\zeta_0]\right\|^{4/3}_{H^1}
+C\int_{0}^{T}\int_{\R_{+}}\psi^{2}(\pa_x\theta^{cd})^{2}dxdt.
\end{split}
\end{equation}
Finally, letting $\de>0$ small enough, combing \eqref{sum.ip6} and \eqref{c.eng} in Lemma \ref{key.es}, we obtain \eqref{eng.p1} as desired, this completes the proof of Proposition \ref{ape}.

\end{proof}

\section{Global existence and large time behavior}

We are now in a position to complete the proof of Theorem 1.1. 

\begin{proof}[Proof of Theorem \ref{main.res.}]
In view of the energy estimates obtained in Proposition \ref{ape}, one sees that
\begin{equation}\label{eng.p2}
\begin{split}
\sup_{0\leq t\leq
T}&\left\|[\varphi,\psi,\zeta,\sigma](t)\right\|_{H^1}^2
\leq
C\delta+C\left\|[\psi_{0},\zeta_{0},\varphi_{0}]\right\|^{4/3}_{H^1}.
\end{split}
\end{equation}
Notice that $\de>0$ is a parameter independent of $\eps_0$. By letting $\de>0$ be small enough, the global existence of
the solution of the Cauchy problem \eqref{pv}, \eqref{pu}, \eqref{pta}, \eqref{pp}, \eqref{p.BC} and \eqref{p.id}
 then follows from the standard continuation argument based on the local existence (cf. \cite{HMS-04}) and the {\it a
 priori}
estimate \eqref{eng.p1}. Moreover, \eqref{eng.p2} implies
\eqref{main.eng}. Our intention next is to prove the large time
behavior as \eqref{sol.lag}. For this, we first justify the
following limits:
\begin{equation}\label{latm1}
\lim\limits_{t\rightarrow+\infty}\left\|\pa_x[\varphi,\psi,\zeta](t)
\right\|_{L^2}^2= 0,
\end{equation}
and
\begin{equation}\label{latm2}
\lim\limits_{t\rightarrow+\infty}\left\|\pa_x\sigma(t)\right\|^2
= 0.
\end{equation}
To prove \eqref{latm1} and \eqref{latm2}, we get from \eqref{pv}, \eqref{pu}, \eqref{pta}, \eqref{eng.p1} and \eqref{tac.pt} that
\begin{equation}\label{latm3}
\begin{split}
\int_{0}^{+\infty}\left|\frac{d}{dt}\left\|\pa_x[\varphi,\psi,\zeta]\right\|^2\right|dt
=&2\int_{0}^{+\infty}\left|\left(\pa_t\pa_x[\varphi,\psi,\zeta],\pa_x[\varphi,\psi,\zeta]\right)\right|dt
\\ \leq& C+C\int_{0}^{+\infty}\left\|\pa_x\left[\varphi,\psi,\zeta,\si,\pa_x\left[\psi,\zeta,\si\right]\right]\right\|^2dt<+\infty.
\end{split}
\end{equation}
On that other hand, \eqref{ptsi}, \eqref{sum.ip2}  and \eqref{eng.p1} yield
\begin{equation}\label{latm4}
\int_{0}^{+\infty}\left|\frac{d}{dt}\left\|\pa_x\sigma\right\|^2\right|dt
=2\int_{0}^{+\infty}\left|\left(\pa_t\pa_x\sigma,\pa_x\sigma\right)\right|dt<+\infty.
\end{equation}
Consequently, \eqref{latm3}, \eqref{latm4} together with \eqref{eng.p1} gives \eqref{latm1} and \eqref{latm2}.
Then \eqref{sol.lag} follows from \eqref{latm1}, \eqref{latm2} and Sobolev's inequality \eqref{sob.}.
This ends the proof of Theorem \ref{main.res.}.

\end{proof}

\section{Appendix}
In this appendix, we will give some basic results used in the paper.
The first lemma is borrowed from \cite{HMS-04}.
\begin{lemma}\label{es.tap}
Let $\ta^{cd}$ satisfy \eqref{pro.eqn}, for $|\theta_{+}-\theta_{-}|=\delta$, it holds that
\begin{equation*}\label{1.29gsg8*x}
\begin{split}
\int_{\R_{+}}(\partial_{x}\theta^{cd})^{4}dx\leq
C\delta^{4}(1+t)^{-\frac{3}{2}},\ \
\int_{\R_{+}}(\partial^{2}_{x}\theta^{cd})^{2}dx\leq
C\delta^{2}(1+t)^{-\frac{3}{2}},
\end{split}
\end{equation*}
\begin{equation*}\label{1.29g8ko*x}
\begin{split}
\int_{\R_{+}}(\partial^{3}_{x}\theta^{cd})^{2}dx\leq
C\delta^{2}(1+t)^{-\frac{5}{2}},\ \
\int_{\R_{+}}x\left((\partial_{x}\theta^{cd})^{2}+|\partial^{2}_{x}\theta^{cd}|\right)dx
\leq C\delta.
\end{split}
\end{equation*}
\end{lemma}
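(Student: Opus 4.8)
The plan is to reduce all four estimates to known decay properties of the self-similar solution $\theta^{cd}(\xi)$, $\xi = x/\sqrt{1+t}$, of the nonlinear diffusion equation \eqref{tac.eqn2}, exactly as in \cite{HMS-04, APKo, HLiu}. First I would recall that \eqref{tac.eqn2} has a unique monotone self-similar solution with the pointwise bound \eqref{tac.pt}: for $\delta = |\theta_+ - \theta_-|$ small,
\begin{equation*}
(1+t)\left|\partial_x^2\theta^{cd}\right| + (1+t)^{1/2}\left|\partial_x\theta^{cd}\right| + \left|\theta^{cd} - \theta_\pm\right| \leq C\delta\, e^{-\frac{c_1 x^2}{1+t}}.
\end{equation*}
Differentiating the self-similar ansatz shows that $\partial_x^k \theta^{cd}(x,t) = (1+t)^{-k/2}\, \Theta_k(\xi)$ for a profile function $\Theta_k$ that decays like $e^{-c_1\xi^2}$ (this follows by differentiating the ODE that $\theta^{cd}(\xi)$ satisfies and using that $\Theta_1$ is integrable and rapidly decaying); in particular one also gets $|\partial_x^3\theta^{cd}| \le C\delta (1+t)^{-3/2} e^{-c_1 x^2/(1+t)}$, which is the pointwise version behind the third estimate.

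Next I would carry out the four integrals by the substitution $x = \sqrt{1+t}\,\xi$, $dx = \sqrt{1+t}\,d\xi$, which converts each spatial integral of a squared $k$-th derivative into $(1+t)^{1/2 - k}$ times a fixed, finite Gaussian-type integral in $\xi$. Concretely: $\int_{\R_+}(\partial_x\theta^{cd})^4\,dx \le C\delta^4 (1+t)^{-2}\int_{\R_+} e^{-4c_1 x^2/(1+t)}\,dx = C\delta^4(1+t)^{-3/2}$; $\int_{\R_+}(\partial_x^2\theta^{cd})^2\,dx \le C\delta^2(1+t)^{-2}\int_{\R_+}e^{-2c_1 x^2/(1+t)}\,dx = C\delta^2(1+t)^{-3/2}$; $\int_{\R_+}(\partial_x^3\theta^{cd})^2\,dx \le C\delta^2(1+t)^{-3}\int_{\R_+}e^{-2c_1 x^2/(1+t)}\,dx = C\delta^2(1+t)^{-5/2}$. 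For the last one, $\int_{\R_+} x\big((\partial_x\theta^{cd})^2 + |\partial_x^2\theta^{cd}|\big)\,dx \le C\delta^2 \int_{\R_+} \frac{x}{1+t} e^{-2c_1 x^2/(1+t)}\,dx + C\delta\int_{\R_+}\frac{x}{1+t}e^{-c_1 x^2/(1+t)}\,dx$, and each integral equals a constant independent of $t$ after the change of variables, so the bound is $\le C\delta$ (absorbing $\delta^2 \le \delta$).

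The only genuinely non-routine point is justifying the pointwise Gaussian bounds on $\partial_x^k\theta^{cd}$ for $k = 1,2,3$, i.e. the claim \eqref{tac.pt} together with its third-order analogue; this is where one actually invokes the construction/regularity theory for the nonlinear diffusion equation from \cite{APKo, HLiu} (the coefficients $\kappa/(g f)$ and $1/f$ in \eqref{tac.eqn2} are smooth and bounded above and below for $\theta^{cd}$ near $\theta_\pm$, so the standard parabolic arguments apply). Once those pointwise estimates are in hand, everything else is the elementary scaling computation above, so I expect no further obstacle. Since the lemma is quoted verbatim from \cite{HMS-04}, it is also legitimate simply to cite that reference for the pointwise bounds and present only the short scaling argument.
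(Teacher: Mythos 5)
The proposal is correct and consistent with the paper, which gives no proof of this lemma at all but simply borrows it from \cite{HMS-04}. Your reduction to the self-similar form $\theta^{cd}=\theta^{cd}(\xi)$, $\xi=x/\sqrt{1+t}$, together with the pointwise Gaussian bounds \eqref{tac.pt} and their third-order analogue from \cite{APKo}, followed by the change of variables $x=\sqrt{1+t}\,\xi$, is exactly the standard argument underlying that citation, and all four scaling computations check out.
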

Next is the key observation from the boundary condition \eqref{p.BC}.
\begin{lemma}\label{pbd}
It holds that
\begin{equation}\label{pvbd}
\varphi(0,t)=\varphi_{0}(0)e^{-\frac{p_{-}}{\mu}t}.
\end{equation}
\end{lemma}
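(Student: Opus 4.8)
The plan is to derive a closed linear ODE in $t$ for the boundary trace $\varphi(0,t)$ by combining the boundary conditions in \eqref{BCl} with the continuity equation $\eqref{NSPl}_1$ and the normalization $R\theta_-=p_-v_-$ that is built into the construction of the contact wave. No \emph{a priori} estimates and no smallness of $\delta$ are needed.

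First I would evaluate $\eqref{NSPl}_1$ at the boundary: since $\pa_t v=\pa_x u$ for $x>0$, letting $x\to 0^+$ gives $\pa_x u(0,t)=\pa_t v(0,t)$. Inserting this into the stress boundary condition $\big(p-\mu\tfrac{\pa_x u}{v}\big)(0,t)=p_-$, together with $p=R\theta/v$ and $\theta(0,t)=\theta_-$, yields
\[
\frac{R\theta_-}{v(0,t)}-\mu\,\frac{\pa_t v(0,t)}{v(0,t)}=p_-,
\]
that is, $\mu\,\pa_t v(0,t)=R\theta_--p_-v(0,t)$ after multiplying through by $v(0,t)>0$.

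Next I would identify the constant $R\theta_-$. By \eqref{r.vta} and the boundary data prescribed in \eqref{pro.eqn}, one has $v^{cd}(0,t)=f(\theta^{cd}(0,t))=f(\theta_-)=v_-$ for all $t$, hence $p^{cd}(0,t)=R\theta_-/v_-$ and $\pa_t v^{cd}(0,t)=0$. Evaluating \eqref{p.cs} at $x=0$, the integral term vanishes and we obtain $p_-=R\theta_-/v_-$, i.e.\ $R\theta_-=p_-v_-$. Substituting this into the previous identity gives
\[
\mu\,\pa_t v(0,t)=p_-v_--p_-v(0,t)=-p_-\big(v(0,t)-v_-\big)=-p_-\varphi(0,t),
\]
where I used $\varphi(0,t)=v(0,t)-v^{cd}(0,t)=v(0,t)-v_-$. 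Since $\pa_t v^{cd}(0,t)=0$ we also have $\pa_t\varphi(0,t)=\pa_t v(0,t)$, so $\varphi(0,t)$ solves the linear ODE $\mu\,\pa_t\varphi(0,t)=-p_-\varphi(0,t)$ with initial value $\varphi(0,0)=\varphi_0(0)$; integrating yields $\varphi(0,t)=\varphi_0(0)e^{-\frac{p_-}{\mu}t}$, as claimed.

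The computation is entirely elementary; the only points requiring a little care are the passage to the boundary in the continuity equation (legitimate by continuity of the solution up to $x=0$) and the recognition that the identity $R\theta_-=p_-v_-$ — equivalently, that the quasineutral pressure of the profile at $x=0$ coincides with the prescribed datum $p_-$ — is already encoded in \eqref{p.cs} and \eqref{r.vta}, so that $\pa_t v^{cd}(0,t)=0$ and the inhomogeneous term in the stress condition collapses exactly to $-p_-\varphi(0,t)$.
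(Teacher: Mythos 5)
Your proof is correct and follows essentially the same route as the paper: both evaluate the continuity equation and the stress boundary condition at $x=0$, use $\theta(0,t)=\theta_-$ and $\partial_t v^{cd}(0,t)=0$ together with the normalization $p_-=R\theta_-/v_-$ (which is in fact directly the definition of $p_-$ in \eqref{cd.con}), and integrate the resulting linear ODE $\mu\,\partial_t\varphi(0,t)=-p_-\varphi(0,t)$.
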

\begin{proof}
Since
$\partial_{x}u^{cd}(0,t)=0,$ from
\eqref{p.BC} it follows that
\begin{equation*}\label{resultqa12e}
\begin{split}
\frac{R\theta_{-}}{v_{-}+\varphi(0,t)}-\mu\frac{\partial_{t}\varphi(0,t)}{v_{-}+\varphi(0,t)}=p_{-},
\ \ t>0,
\end{split}
\end{equation*}
which implies
\begin{equation}\label{pvbd2}
\partial_{t}\varphi(0,t)=-\frac{p_{-}}{\mu}\varphi(0,t).
\end{equation}
\eqref{pvbd} follows from \eqref{pvbd2} and the compatibility condition $\varphi(0,0)=\varphi_{0}(0)$. This ends the proof of Lemma \ref{pbd}.

\end{proof}
We now give the following estimates concerning the delicate term
$\displaystyle{\int_{0}^{T}\int_{\R_{+}}}(\partial_{x}\theta^{cd})^{2}\psi^{2}dx
dt$.
\begin{lemma}\label{key.es}
Assume all the conditions listed in Proposition \ref{ape} hold, then for any $0\leq T\leq +\infty$,
there exists an energy functional $\CE(\varphi,\psi,\zeta)$ with
$$
|\CE(\varphi,\psi,\zeta)|\leq C\de^2\|[\varphi,\psi,\zeta]\|^2,
$$
such that the following energy estimate holds
\begin{equation}\label{c.eng}
\begin{split}
\CE(\varphi,\psi,\zeta)(T)+\int_{0}^{T}\int_{\R_{+}}(\partial_{x}\theta^{cd})^{2}\psi^{2}dx dt
\leq C\delta+C\delta\left\|\varphi_{0}\right\|^{2}_{H^1}
+C\delta\int_{0}^{T}\|\partial_{x}[\varphi,\psi,\zeta,\sigma,\partial_{x}\sigma\|^{2}dt.
\end{split}
\end{equation}
\end{lemma}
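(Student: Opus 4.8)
The plan is to prove \eqref{c.eng} by a supplementary weighted energy estimate for the momentum equation \eqref{pu}, following the strategy of \cite{HMS-04,HLM} but accommodating both the moving boundary and the self-consistent force. The crucial structural input is that $\ta^{cd}$ solves the nonlinear diffusion equation \eqref{tac.eqn2} and is self-similar in $\xi=x/\sqrt{1+t}$; consequently $(\pa_x\ta^{cd})^2$ possesses an $x$-primitive $W=W(x,t)$ (normalized so that the boundary term it creates at $x=0$ carries no uncontrollable factor of $\psi(0,t)$) which, by \eqref{tac.pt} and Lemma \ref{es.tap}, obeys $\|W(\cdot,t)\|_{L^\infty}\le C\de^2(1+t)^{-1/2}$ and $\|\pa_t W(\cdot,t)\|_{L^\infty}\le C\de^2(1+t)^{-3/2}$, together with a weighted bound $\|x^{1/2}W_{\rm loc}(\cdot,t)\|_{L^2}\le C\de^2$ for the localized part of $W$. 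First I would multiply \eqref{pu} by $\psi W$, use $\pa_x\psi=\pa_t\varphi$ from \eqref{pv}, and integrate by parts once in $x$ and once in $t$, so as to extract $\int_0^T\!\!\int_{\R_+}(\pa_x\ta^{cd})^2\psi^2\,dx\,dt$ on the left, the remaining ``energy'' contribution being $\CE(\varphi,\psi,\zeta):=2\int_{\R_+}W\varphi\psi\,dx$; the bound $|\CE|\le C\de^2\|[\varphi,\psi,\zeta]\|^2$ is then immediate from $\|W\|_{L^\infty}\le C\de^2(1+t)^{-1/2}$ and Cauchy--Schwarz, and likewise $|\CE(0)|\le C\de$.

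Then I would substitute the momentum equation \eqref{pu} for $\pa_t\psi$ and estimate, term by term, the bulk contributions, integrating in $t$. The term carrying $\pa_t W$ decays like $(1+t)^{-3/2}$ and is $\le C\de$. In the pressure contribution, one integration by parts in $x$ combined with $\pa_x W\sim(\pa_x\ta^{cd})^2$ yields both a term $\int(\pa_x\ta^{cd})^2\varphi(p-p^{cd})$ — controlled by \eqref{p.ine2} after writing $p-p^{cd}=R\zeta/v-R\ta^{cd}\varphi/(vv^{cd})$ — and a term $\int W\pa_x\varphi\,(p-p^{cd})$ — controlled through the Poincar\'e inequalities \eqref{p.ine}, the weighted $L^2$ bound on $W$, and $\int_0^\infty x(\pa_x\ta^{cd})^2\,dx\le C\de$ from Lemma \ref{es.tap} — the two together being bounded by $C\de\|\varphi_0\|_{H^1}^2+C\de\int_0^T\|\pa_x[\varphi,\zeta]\|^2\,dt$. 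The viscous contribution gives, upon integration by parts, the harmless good term $-\mu\int\frac{(\pa_x\psi)^2}{v}W$ of size $O(\de^2)$ plus $\mu\int\frac{\pa_x\psi}{v}\psi(\pa_x\ta^{cd})^2$, absorbed by means of the $L^4$ bound $\int(\pa_x\ta^{cd})^4\le C\de^4(1+t)^{-3/2}$ of Lemma \ref{es.tap} and Young's inequality; and the $F$-contribution is bounded by $C\de+C\de\int_0^T\|\pa_x\varphi\|^2\,dt$. All boundary terms at $x=0$ are killed by the normalization of $W$ together with the exponential decays $\varphi(0,t)=\varphi_0(0)e^{-p_-t/\mu}$ (Lemma \ref{pbd}) and $\pa_x\psi(0,t)=-\frac{p_-}{\mu}\varphi_0(0)e^{-p_-t/\mu}$ (which follows from \eqref{BCl} exactly as in the proof of Lemma \ref{pbd}), contributing at most $C\de\|\varphi_0\|_{H^1}^2$.

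The genuinely new contribution is the self-consistent-force term $2\int_0^T\!\!\int_{\R_+}W\varphi\big(\frac{\pa_x\phi}{v}-\frac{\pa_x\phi^{cd}}{v^{cd}}\big)$. I would handle it exactly as the delicate term $I_1$ in Step 1 of the proof of Proposition \ref{ape}: rewrite $\frac{\pa_x\phi}{v}-\frac{\pa_x\phi^{cd}}{v^{cd}}$ via the structural identity \eqref{pp} and expand $\rho_e(\sigma+\phi^{cd})$ to third order by Taylor's formula \eqref{taylor}. The leading term $-v^{cd}\rho_e'(\phi^{cd})\sigma$ then has the favorable sign precisely because $\rho_e'(\phi^{cd})<0$ by assumption $(\CA_2)$; this is what makes the remaining contributions absorbable into $C\de\int_0^T\|\pa_x[\sigma,\pa_x\sigma]\|^2\,dt$, which is why $\pa_x\sigma$ and $\pa_x^2\sigma$ enter the right-hand side of \eqref{c.eng}. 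Collecting all the estimates, integrating in $t$, and choosing $\de$ sufficiently small gives \eqref{c.eng}. I expect the main obstacle to be the time-decay bookkeeping for the critical quantity: because $(\pa_x\ta^{cd})^2$ decays only like $(1+t)^{-1}$ pointwise and like $(1+t)^{-1/2}$ in spatial mass, each term must be arranged to be controlled either by the dissipation $\int\|\pa_x[\cdots]\|^2\,dt$ (tolerated on the right of \eqref{c.eng}) or by a genuinely $t$-integrable quantity; this dictates the precise normalization of $W$, the repeated use of \eqref{p.ine}--\eqref{p.ine2} together with $\int_0^\infty x(\pa_x\ta^{cd})^2\,dx\le C\de$, and the sharp $L^4$ decay of $\pa_x\ta^{cd}$, while on the Poisson side the sign hypothesis $(\CA_2)$ is indispensable.
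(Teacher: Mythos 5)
Your overall strategy---introducing the antiderivative $W(x,t)=\int_0^x(\pa_y\ta^{cd})^2\,dy$ and extracting $\int\!\!\int(\pa_x\ta^{cd})^2\psi^2$ from the hyperbolic structure by differentiating $\CE=2\int W\varphi\psi$ in time and using $\pa_t\varphi=\pa_x\psi$ (so that $2\int W\psi\pa_x\psi=-\int(\pa_x\ta^{cd})^2\psi^2$)---is the right family of ideas, although your opening sentence says you multiply \eqref{pu} by $\psi W$ while every term you subsequently list is generated by the multiplier $\varphi W$. The genuine gap is in the pressure contribution. Substituting \eqref{pu} for $\pa_t\psi$ in $2\int W\varphi\,\pa_t\psi$ and integrating by parts produces, besides the harmless $2\int(\pa_x\ta^{cd})^2\varphi(p-p^{cd})$, the cross term
$$2\int_0^T\!\!\int_{\R_+}W\,\pa_x\varphi\,\frac{R\zeta}{v}\,dx\,dt.$$
This is quadratic in the perturbation with only \emph{one} dissipative factor, and the weight $W$ does not localize in $x$: $W(x,t)\to W_\infty(t)=\int_0^\infty(\pa_y\ta^{cd})^2\,dy\sim\de^2(1+t)^{-1/2}$ as $x\to\infty$, so $\int_0^\infty xW^2\,dx=+\infty$ and the Poincar\'e inequality $|\zeta|\le x^{1/2}\|\pa_x\zeta\|$ cannot be paid for. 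The tools you invoke give at best $C\de^2(1+t)^{-1/2}\|\pa_x\varphi\|\,\|\zeta\|\le C\de^2\eps_0(1+t)^{-1/2}\|\pa_x\varphi\|$, and Young's inequality then leaves a remainder $C\de^2\eps_0^2\ln(1+T)$, which is not dominated by the right-hand side of \eqref{c.eng} uniformly in $T$. Splitting off your ``localized part'' of $W$ only shifts the problem into $W_\infty(t)\int\pa_x\varphi\,R\zeta/v\,dx$, which has exactly the same logarithmic divergence, and integrating by parts the other way yields $W_\infty(t)\int\varphi\,\pa_x(R\zeta/v)\,dx$ with the same defect.

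The paper avoids this term entirely by a different choice of multiplier: it first eliminates $\sigma$ from the momentum equation via the Poisson identity \eqref{si.ep}, rewrites \eqref{pu} in the divergence form \eqref{pp.rw2} with the effective pressure $G=R\zeta+\bigl(\tfrac{1}{vv^{cd}\rho_{e}'(\phi^{cd})}-p^{cd}\bigr)\varphi$, and multiplies by $Gvw$. The pressure-gradient term then contributes the perfect square $\tfrac12\int G^2(\pa_x\ta^{cd})^2$ rather than any cross term, and the desired $\int\psi^{2}(\pa_x\ta^{cd})^{2}$ is produced instead by the time derivative of the multiplier, through the energy equation and the Poisson-reduced continuity equation, with coefficient $\tfrac{\gamma}{2}p^{cd}v-\tfrac{1}{2v^{cd}\rho_{e}'(\phi^{cd})}$; it is the positivity of this coefficient---not the sign of the leading Taylor term of the force, as you assert---where $(\CA_2)$ is indispensable. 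Unless you can control the cross term displayed above (I do not see how with the estimates available in the paper), your functional $\CE=2\int W\varphi\psi$ must be replaced by one built on the effective pressure, which is essentially the paper's proof.
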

\begin{proof}
Define $$w=\int_{0}^{x}(\partial_{y}\theta^{cd})^{2}dy.$$
It is easy to check that
\begin{equation}\label{w.es}
\begin{split}
\|w(\cdot,t)\|_{\infty}\leq C\delta^{2} (1+t)^{-\frac{1}{2}},\ \
\|\partial_{t}w(\cdot,t)\|_{\infty}\leq
C\delta^{2}(1+t)^{-\frac{3}{2}}.
\end{split}
\end{equation}
From \eqref{pp} and \eqref{taylor}, it follows that
\begin{equation}\label{si.ep}
\begin{split}
\sigma=-\frac{\varphi}{vv^{cd}\rho_{e}'(\phi^{cd})}
\underbrace{-\frac{1}{vv^{cd}\rho_{e}'(\phi^{cd})}\left[v^{cd}\pa_x
\left(\frac{\pa_x\sigma}{v}\right)+\left(\frac{v^{cd}\rho_{e}^{''}(\phi^{cd})}{2}\sigma^{2}-I_0\right)v
+v^{cd}\pa_x\left(\frac{\pa_x\phi^{cd}}{v}\right)\right]}_{\CM}.
\end{split}
\end{equation}
On the other hand, \eqref{pu} can be rewritten as
\begin{equation}\label{pp.rw}
\begin{split}
\pa_t \psi+\pa_x\left(\frac{R\zeta-p^{cd}\varphi-\sigma}{v}\right)
=-\pa_x\left(\frac{1}{v}\right)\sigma-\frac{\varphi\pa_x\phi^{cd}}{vv^{cd}}+\mu\pa_x\left(\frac{\pa_x
\psi}{v}\right)+F.
\end{split}
\end{equation}
Substituting \eqref{si.ep} into \eqref{pp.rw}, one has
\begin{equation}\label{pp.rw2}
\begin{split}
\pa_t
\psi+\pa_x\left(\frac{R\zeta+\left(\frac{1}{vv^{cd}\rho_{e}'(\phi^{cd})}-p^{cd}\right)\varphi}{v}\right)
=\pa_x\left(\frac{\CM}{v}\right)-\pa_x\left(\frac{1}{v}\right)\sigma-\frac{\varphi\pa_x\phi^{cd}}{vv^{cd}}+\mu\pa_x\left(\frac{\pa_x
\psi}{v}\right)+F.
\end{split}
\end{equation}
Multiplying \eqref{pp.rw2} by
$\left[R\zeta+\left(\frac{1}{vv^{cd}\rho_{e}'(\phi^{cd})}-p^{cd}\right)\varphi\right]
vw,$ integrating the resulting equation over $\R_{+}$ leads to
\begin{equation}\label{c.eng2}
\begin{split}
\frac{1}{2}&\int_{\R_{+}}\left[R\zeta+\left(\frac{1}{vv^{cd}\rho_{e}'(\phi^{cd})}-p^{cd}\right)\varphi\right]^{2}(\partial_{x}\theta^{cd})^{2}dx\\
=&\frac{d}{dt}\int_{\R_{+}}\psi\left[R\zeta+\left(\frac{1}{vv^{cd}\rho_{e}'(\phi^{cd})}-p^{cd}\right)\varphi\right]vwdx\underbrace{-\int_{\R_{+}}
\psi\partial_{t}\left[R\zeta+\left(\frac{1}{vv^{cd}\rho_{e}'(\phi^{cd})}-p^{cd}\right)\varphi\right]vwdx}_{\CK_1}\\
&\underbrace{-\int_{\R_{+}}\psi\left[R\zeta+\left(\frac{1}{vv^{cd}\rho_{e}'(\phi^{cd})}-p^{cd}\right)\varphi\right]\partial_{t}vwdx}_{\CK_2}
\underbrace{-\int_{\R_{+}}\psi\left[R\zeta+\left(\frac{1}{vv^{cd}\rho_{e}'(\phi^{cd})}-p^{cd}\right)\varphi\right]v\partial_{t}wdx}_{\CK_3}\\
&\underbrace{-\int_{\R_{+}}\frac{\partial_{x}v}{v}\left[R\zeta+\left(\frac{1}{vv^{cd}\rho_{e}'(\phi^{cd})}-p^{cd}\right)\varphi\right]^{2}wdx}_{\CK_4}
\underbrace{+\mu\int_{\R_{+}}\frac{\partial_{x}\psi}{v}\partial_{x}
\left[\left(R\zeta+\left(\frac{1}{vv^{cd}\rho_{e}'(\phi^{cd})}-p^{cd}\right)\varphi\right)vw\right]dx}_{\CK_5}\\
&\underbrace{-\int_{\R_{+}}F\left[R\zeta+\left(\frac{1}{vv^{cd}\rho_{e}'(\phi^{cd})}-p^{cd}\right)\varphi\right]vwdx}_{\CK_6}
\underbrace{-\int_{\R_{+}}\frac{\partial_{x}\varphi}{v^{2}}
\sigma\left[R\zeta+\left(\frac{1}{vv^{cd}\rho_{e}'(\phi^{cd})}-p^{cd}\right)\varphi\right]vwdx}_{\CK_7}
\\&\underbrace{-\int_{\R_{+}}\frac{\partial_{x}v^{cd}}{v^{2}}
\sigma\left[R\zeta+\left(\frac{1}{vv^{cd}\rho_{e}'(\phi^{cd})}-p^{cd}\right)\varphi\right]vwdx}_{\CK_8}
\underbrace{+\int_{\R_{+}}\frac{\CM}{v}\partial_{x}\left[\left(R\zeta+\left(\frac{1}{vv^{cd}\rho_{e}'(\phi^{cd})}-p^{cd}\right)\varphi\right)vw\right]dx}_{\CK_9}
\\&\underbrace{+\int_{\R_{+}}\frac{\varphi\partial_{x}\phi^{cd}
}{vv^{cd}}\left[R\zeta+\left(\frac{1}{vv^{cd}\rho_{e}'(\phi^{cd})}-p^{cd}\right)\varphi\right]vwdx}_{\CK_{10}}.
\end{split}
\end{equation}
We now turn to compute $\CK_{l}$ $(1\leq l\leq 10)$ term by term. For the delicate term $\CK_1$, it can be rewritten as
\begin{equation}\label{ck1}
\begin{split}
\CK_{1} 
=&-\int_{\R_{+}}\psi\partial_{t}\left(R\zeta-p^{cd}\varphi\right)vwdx-
\int_{\R_{+}}\psi\partial_{t}\left(\frac{1}{vv^{cd}\rho_{e}'(\phi^{cd})}\varphi\right)vwdx\\
=&-(\gamma-1)\int_{\R_{+}}\psi
vw\partial_{t}\left(\frac{R}{\gamma-1}\zeta+p^{cd}\varphi\right)dx+\gamma\int_{\R_{+}}\psi
vwp^{cd}\partial_{x}\psi dx\\&+\gamma\int_{\R_{+}}\psi
vw\partial_{t}p^{cd}\varphi dx-
\frac{1}{2}\int_{\R_{+}}\partial_{x}(\psi^{2})\left(\frac{w}{v^{cd}\rho_{e}'(\phi^{cd})}\right)dx-
\int_{\R_{+}}\psi\varphi\partial_{t}\left(\frac{1}{vv^{cd}\rho_{e}'(\phi^{cd})}\right)vwdx\\
=&\underbrace{(\gamma-1)\int_{\R_{+}}\psi
w\left(R\zeta-p^{cd}\varphi\right)\left(\partial_{x}u^{cd}+\partial_{x}\psi\right)dx}_{\CK_{1,1}}
\underbrace{+\kappa(\gamma-1)\int_{\R_{+}}\frac{v^{cd}\partial_{x}\zeta-\partial_{x}\theta^{cd}\varphi}{vv^{cd}}\partial_{x}\left(\psi
vw\right)dx}_{\CK_{1,2}}\\& \underbrace{-(\gamma-1)\int_{\R_{+}}\psi
vwGdx}_{\CK_{1,3}}\underbrace{-\frac{\gamma}{2}\int_{\R_{+}}p^{cd}\partial_{x}vw\psi^{2}dx}_{\CK_{1,4}}
\underbrace{-\frac{\gamma}{2}\int_{\R_{+}}\partial_{x}p^{cd}vw\psi^{2}dx}_{\CK_{1,5}}\underbrace{+\gamma\int_{\R_{+}}\psi
vw\partial_{t}p^{cd}\varphi dx}_{\CK_{1,6}}\\&+
\int_{\R_{+}}\left(\frac{1}{2v^{cd}\rho_{e}'(\phi^{cd})}-\frac{\gamma}{2}p^{cd}v\right)\psi^{2}(\partial_{x}\theta^{cd})^{2}dx
\underbrace{+\frac{1}{2}\int_{\R_{+}}\psi^{2}\partial_{x}\left(\frac{1}{v^{cd}\rho_{e}'(\phi^{cd})}\right)wdx}_{\CK_{1,7}}
\\&\underbrace{-\int_{\R_{+}}\psi\varphi\partial_{t}\left(\frac{1}{vv^{cd}\rho_{e}'(\phi^{cd})}\right)vwdx}_{\CK_{1,8}},
\end{split}
\end{equation}
where in the third identity we have used
\begin{equation*}\label{5.7}
\begin{split}
\frac{R}{\gamma-1}\pa_t\zeta+p^{cd}\pa_t\varphi=-\frac{R\zeta-p^{cd}\varphi}{v}\left(\partial_{x}u^{cd}+\partial_{x}\psi\right)
+\kappa\pa_x\left(\frac{v^{cd}\pa_x\zeta-\pa_x\theta^{cd}\varphi}{vv^{cd}}\right)+G,
\end{split}
\end{equation*}
which is derived from \eqref{pv} and \eqref{pta}.

Since $\rho_{e}'(\phi^{cd})<0$ according to the assumption $(\CA)_2$, \eqref{ck1} further implies
\begin{equation}\label{ps.ck1}
0<-\int_{\R_{+}}\left(\frac{1}{2v^{cd}\rho_{e}'(\phi^{cd})}-\frac{\gamma}{2}p^{cd}v\right)\psi^{2}(\partial_{x}\theta^{cd})^{2}dx
=-\CK_1+\sum\limits_{l=1}^{8}\CK_{1,l}.
\end{equation}
To compute $\CK_{1,l}$ $(1\leq l\leq 8)$ and $\CK_{l}$ $(2\leq l\leq 10)$, by applying \eqref{p.ine2}, \eqref{w.es},
Cauchy-Schwarz's inequality, Sobolev's inequality \eqref{sob.}, Young's inequality and  Lemmas \ref{pbd}
and \ref{es.tap}, we directly address the following
estimates:
\begin{equation*}\label{5.8}
\begin{split}
|\CK_{1,1}|+|\CK_{1,6}|+|\CK_{1,8}|+|\CK_{2}|\leq& C\int_{\R_{+}}|\psi
w|(|\pa_xu^{cd}|+|\pa_t\theta^{cd}|)(|\zeta|+|\varphi|)dx+C\int_{\R_{+}}|\psi
w\pa_x\psi|(|\zeta|+|\varphi|)dx
\\ \leq& C\|w\pa_t\theta^{cd}\|_{L^\infty}\|[\varphi,\psi,\zeta]\|^2
+C\de\|\pa_x\psi\|^{2}+\frac{C}{\de}\int_{\R_{+}}w^2\psi^2[\varphi,\zeta]^2dx
\\ \leq& C\delta\|\pa_x\psi\|^{2}+C\delta\eps_0(1+t)^{-\frac{3}{2}}+\frac{C}{\de^2}\|w\|^4_{L^\infty}\|\psi\|^2\|[\varphi,\zeta]\|^4
\\ \leq& C\delta\|\pa_x\psi\|^{2}+C\delta\eps_0(1+t)^{-\frac{3}{2}},
\end{split}
\end{equation*}
\begin{equation*}\label{5.9}
\begin{split}
|\CK_{1,2}|+|\CK_{5}|\leq&C\int_{\R_{+}}\left(|\partial_{x}\varphi|+|\partial_{x}\zeta|+|\partial_{x}\theta^{cd}\varphi|\right)
\left(|\partial_{x}\psi w|+\left|\psi(\partial_{x}\theta^{cd})^{2}\right|+|\psi w\partial_{x}v|\right)dx
\\ \leq&
C\delta\|\pa_x[\varphi,\psi,\zeta]\|^{2}+C\delta(1+t)^{-\frac{3}{2}},
\end{split}
\end{equation*}
\begin{equation*}\label{5.10}
\begin{split}
|\CK_{1,3}|\leq& C\int_{\R_{+}}\left|\psi
w\left((\partial_{x}u^{cd})^{2}+(\partial_{x}\psi)^{2}\right)\right|dx
\leq C\int_{\R_{+}}\psi^2
w^2(\partial_{x}u^{cd})^{2}dx+C\int_{\R_{+}}(\partial_{x}u^{cd})^{2}dx
+C
\delta\|\pa_x\psi\|^{2}
\\ \leq & C
\delta\|\pa_x\psi\|^{2}+C\delta(1+t)^{-\frac{3}{2}},
\end{split}
\end{equation*}
\begin{equation*}\label{5.11}
\begin{split}
|\CK_{1,4}|+|\CK_{1,5}|+|\CK_{1,7}|&\leq
C\int_{\R_{+}}\left|\partial_{x}\theta^{cd}w\psi^{2}\right|dx
+C\int_{\R_{+}}\left|\partial_{x}\varphi w\psi^{2}\right|dx
\\& \leq
C\delta\int_{\R_{+}}(\partial_{x}\theta^{cd})^{2}\psi^{2}dx+C\delta\|\pa_x[\psi,\varphi]\|^{2}+C\delta(1+t)^{-2},
\end{split}
\end{equation*}
\begin{equation*}\label{5.12}
\begin{split}
|\CK_{3}|\leq
C\int_{\R_{+}}\left|\psi\left(|\zeta|+|\varphi|\right)\partial_{t}w\right|dx\leq
C\delta\eps_0(1+t)^{-\frac{3}{2}},
\end{split}
\end{equation*}
\begin{equation*}\label{5.11}
\begin{split}
|\CK_{4}|+|\CK_{10}|&\leq
C\int_{\R_{+}}\left|\partial_{x}\theta^{cd}w\left(\zeta^{2}+\varphi^{2}\right)\right|dx
+C\int_{\R_{+}}\left|\partial_{x}\varphi
w\left(\zeta^{2}+\varphi^{2}\right)\right|dx
\\ \leq& C\de(1+t)^{-1/2}\int_{\R_{+}}|\partial_{x}\theta^{cd}||x|\left(\|\pa_x\zeta\|^{2}+\|\pa_x\varphi\|^{2}+|\varphi(0,t)|^2\right)dx
\\&+C\de\|\pa_x\varphi,\zeta\|^{2}+\frac{C}{\de}\|w\|_{L^\infty}^2\left(\|\zeta\|\|\partial_{x}\zeta\|
+\|\varphi\|\|\partial_{x}\varphi\|\right)\|[\varphi,\zeta]\|^2
\\ \leq&
C\delta\|\varphi_{0}\|_{H^1}^{2}e^{-\frac{p_{-}}{\mu}t}+C\delta\|\pa_x[\varphi,\zeta]\|^{2}+C\delta(1+t)^{-2},
\end{split}
\end{equation*}
\begin{equation*}\label{5.13}
\begin{split}
|\CK_{6}|\leq&
C\delta^2(1+t)^{-\frac{1}{2}}\|F\|_{L^{1}}\left(\|\zeta\|_{\infty}+\|\varphi\|_{\infty}\right)
\leq
C\delta^2(1+t)^{-1}\left(\|\zeta\|^{\frac{1}{2}}\|\partial_{x}\zeta\|^{\frac{1}{2}}
+\|\varphi\|^{\frac{1}{2}}\|\partial_{x}\varphi\|^{\frac{1}{2}}\right)
\\ \leq& C\delta\|\pa_x[\varphi,\zeta]\|^{2}+C\delta(1+t)^{-\frac{4}{3}},
\end{split}
\end{equation*}
\begin{equation*}\label{5.14}
\begin{split}
|\CK_{7}|\leq&
C\int_{\R_{+}}\left|\partial_{x}\varphi\sigma(|\zeta|+|\varphi|)w\right|dx
\leq C\delta(1+t)^{-\frac{1}{2}}\left(\|\zeta\|_{\infty}+\|\varphi\|_{\infty}\right)\|\partial_{x}\varphi\|\|\sigma\|\\
\leq& C\delta(1+t)^{-\frac{1}{2}}\left(\|\zeta\|^{\frac{1}{2}}\|\partial_{x}\zeta\|^{\frac{1}{2}}
+\|\varphi\|^{\frac{1}{2}}\|\partial_{x}\varphi\|^{\frac{1}{2}}\right)\|\partial_{x}\varphi\|\|\sigma\|
\\ \leq& C\delta\|\partial_{x}[\varphi,\zeta]\|^{2}+C\delta(1+t)^{-2},
\end{split}
\end{equation*}
\begin{equation*}\label{5.15}
\begin{split}
|\CK_{8}|
\leq C\int_{\R_{+}}|\partial_{x}\theta^{cd}w||\si|\left(|\zeta|+|\varphi|\right)dx
\leq C\delta\|\varphi_{0}\|_{H^1}^{2}e^{-\frac{p_{-}}{\mu}t}
+C\delta\|\partial_{x}[\varphi,\zeta,\sigma]\|^{2},
\end{split}
\end{equation*}
\begin{equation*}\label{5.16}
\begin{split}
|\CK_{9}|\leq&
C\int_{\R_{+}}\left[|\partial^{2}_{x}\sigma|+|\partial_{x}\sigma\partial_{x}v|+|\sigma^{2}|
+|\partial_{x}v^{cd}\partial_{x}v|\right]\left[(|\partial_{x}\zeta|+|\partial_{x}\varphi|)|w|+(|\zeta|+|\varphi|)
((\partial_{x}\theta^{cd})^{2}+|\partial_{x}v
w|)\right]dx\\
\leq&C\delta\|\partial_{x}[\varphi,\zeta,\sigma,\partial_{x}\sigma]\|^{2}+C\delta(1+t)^{-\frac{3}{2}}.
\end{split}
\end{equation*}
We now plug the above estimates for $\CK_{1,l}$ with $1\leq l\leq 8$ into \eqref{ps.ck1} to obtain
\begin{equation}\label{ps.ck11}
\begin{split}
&\left|\CK_1-\int_{\R_{+}}\left(\frac{1}{2v^{cd}\rho_{e}'(\phi^{cd})}-\frac{\gamma}{2}p^{cd}v\right)\psi^{2}(\partial_{x}\theta^{cd})^{2}dx\right|
\\& \qquad\leq C\delta\|\pa_x[\varphi,\psi,\zeta]\|^{2}+C\delta(1+t)^{-\frac{3}{2}}
+C\delta\int_{\R_{+}}(\partial_{x}\theta^{cd})^{2}\psi^{2}dx.
\end{split}
\end{equation}
Next by substituting the estimates for $\CK_{l}$ $(2\leq l\leq 10)$
and \eqref{ps.ck11} into \eqref{c.eng2} and integrating the
resulting equality with respect to time over $[0,T]$, one has
\begin{equation}\label{c.eng3}
\begin{split}
-\int_{\R_{+}}&\psi\left[R\zeta+\left(\frac{1}{vv^{cd}\rho_{e}'(\phi^{cd})}-p^{cd}\right)\varphi\right]vwdx
\\&+\int_{0}^{T}\int_{\R_{+}}\left\{\frac{1}{2}\left[R\zeta+\left(\frac{1}{vv^{cd}\rho_{e}'(\phi^{cd})}-p^{cd}\right)\varphi\right]^{2}
+\left(-\frac{1}{2v^{cd}\rho_{e}'(\phi^{cd})}+\frac{\gamma}{2}p^{cd}v-C\delta\right)\psi^{2}\right\}(\partial_{x}\theta^{cd})^{2}dxdt\\
\leq&C\delta+C\de\eps_0+C\delta\left\|\varphi_{0}\right\|^{2}_{H^1}+
C\delta\int_{0}^{T}\|\partial_{x}[\varphi,\psi,\zeta,\sigma,\partial_{x}\sigma\|^{2}dt,
\end{split}
\end{equation}
for suitably small $\de>0$ and $\eps_0>0$.

Let us now define
\begin{equation}\label{fe.def}
\CE(\varphi,\psi,\zeta)=-\int_{\R_{+}}\psi\left[R\zeta+\left(\frac{1}{vv^{cd}\rho_{e}'(\phi^{cd})}-p^{cd}\right)\varphi\right]vwdx,
\end{equation}
then \eqref{c.eng} follows from \eqref{c.eng3} and \eqref{fe.def}, this ends the proof of Lemma \ref{key.es}.

\end{proof}

Finally we give the detailed proof of \eqref{ptsi}.

\begin{proof}[Proof of \eqref{ptsi}] 
Taking the inner product of $\pa_t\eqref{pp}$ with $\pa_t\si$ with respect to $x$ over $\R_+$, one
has
\begin{equation}\label{ptsi.es}
\begin{split}
&\underbrace{\int_{\R_{+}}\pa_t\left(\frac{v^{cd}}{v}\right)\pa^2_x\sigma\pa_t\sigma
dx}_{\CJ_1}\underbrace{+
\int_{\R_{+}}\frac{v^{cd}}{v}\pa_t\pa^2_x\sigma\pa_t\sigma dx}_{\CJ_2}
\underbrace{- \int_{\R_{+}}\pa_t\left(\frac{v^{cd}}{v^2}\pa_xv\right)\pa_x\sigma
\pa_t\sigma dx}_{\CJ_3}\underbrace{-\int_{\R_+}
\frac{v^{cd}}{v^2}\pa_xv\pa_t\pa_x\sigma\pa_t\sigma dx}_{\CJ_4}
\\&\quad=-\int_{\R_{+}}\pa_t\varphi\pa_t\sigma
dx+\int_{\R_{+}}\pa_tv\left(1-v^{cd}\rho_{e}(\sigma+\phi^{cd})\right)\pa_t\sigma
dx
\\&\qquad+\int_{\R_{+}}v\pa_t\left(1-v^{cd}\rho_{e}(\sigma+\phi^{cd})\right)\pa_t\sigma dx
-\int_{\R_{+}}\pa_t\left(v^{cd}\pa_x\left(\frac{\pa_x\phi^{cd}}{v}\right)\right)\pa_t\sigma
dx.
\end{split}
\end{equation}
We turn our attention first to $\CJ_l$ $(1\leq l\leq4)$ which can not be directly controlled.
Since $\si(0,t)=\si(+\infty,t)=0$, by integration by parts and using the cancellation, we find
\begin{equation}\label{CJ24}
\begin{split}
\CJ_2+\CJ_4=-\int_{\R_{+}}\frac{v^{cd}}{v}\pa_t\pa_x\sigma\pa_t\pa_x\sigma dx-\int_{\R_{+}}\frac{\pa_xv^{cd}}{v}\pa_t\pa_x\sigma\pa_t\sigma dx,
\end{split}
\end{equation}
and
\begin{equation}\label{CJ13}
\begin{split}
\CJ_1+\CJ_3=\int_{\R_{+}}\frac{\pa_tv^{cd}}{v}\pa^2_x\sigma\pa_t\sigma dx
+\int_{\R_{+}}\frac{v^{cd}}{v^2}\pa_tv\pa_x\sigma\pa_x\pa_t\sigma dx
-\int_{\R_{+}}\frac{\pa_tv^{cd}}{v^2}\pa_xv\pa_x\sigma
\pa_t\sigma dx.
\end{split}
\end{equation}
On the other hand, similar to \eqref{taylor}, one has
\begin{equation}\label{taylor2}
1-v^{cd}\rho_{e}(\sigma+\phi^{cd})=-v^{cd}\rho_{e}'(\phi^{cd})\sigma
\underbrace{-v^{cd}\int_{\phi^{cd}}^{\phi}\rho_{e}''(\varrho)(\phi-\varrho)d\varrho}_{\CJ_0},
\end{equation}
and moreover
\begin{equation}\label{ptR2}
\pa_t\CJ_0\thicksim \pa_t\si\si+\pa_tv^{cd}\si+\pa_tv^{cd}\si^2.
\end{equation}
Substituting \eqref{CJ24}, \eqref{CJ13}, \eqref{taylor2} and \eqref{ptR2} into \eqref{ptsi.es} and applying \eqref{pv} and \eqref{NSPl}, we deduce
\begin{equation*}\label{ptsi.es2}
\begin{split}
\int_{\R_{+}}&\frac{v^{cd}}{v}|\pa_t\pa_x\sigma|^2 dx-\int_{\R_{+}}vv^{cd}\rho_{e}'(\phi^{cd})|\pa_t\sigma|^2dx\\
\leq& \left|\int_{\R_{+}}\frac{\pa_xv^{cd}}{v}\pa_t\pa_x\sigma\pa_t\sigma dx\right|+\left|\int_{\R_{+}}\frac{\pa_tv^{cd}}{v}\pa^2_x\sigma\pa_t\sigma dx\right|+\left|\int_{\R_{+}}\frac{v^{cd}}{v^2}\pa_tv\pa_x\sigma\pa_x\pa_t\sigma dx\right|
\\&+\left|\int_{\R_{+}}\frac{\pa_tv^{cd}}{v^2}\pa_xv\pa_x\sigma
\pa_t\sigma dx\right|+C\int_{\R_{+}}|\pa_x\psi\pa_t\sigma|dx+C\int_{\R_{+}}|\pa_xu\si\pa_t\sigma|dx
\\&+C\int_{\R_{+}}|\pa_tv^{cd}\si\pa_t\sigma|dx+C\int_{\R_{+}}|\pa_t\si\si\pa_t\sigma|dx+C\int_{\R_{+}}|\pa_tv^{cd}\si^2\pa_t\sigma|dx
\\&+C\int_{\R_{+}}|\pa_t\pa_x^2v^{cd}\pa_t\sigma|dx+C\int_{\R_{+}}|\pa_tv^{cd}\pa^2_xv^{cd}\pa_t\sigma|dx
+C\int_{\R_{+}}|\pa_tv^{cd}\pa_xv^{cd}\pa_xv\pa_t\sigma|dx
\\&+C\int_{\R_{+}}|\pa_xv^{cd}\pa_x^2u\pa_t\sigma|dx,
\end{split}
\end{equation*}
which yields \eqref{ptsi}, according to Cauchy-Schwarz's inequality,
\eqref{p.ine2} and Lemma \ref{es.tap}. This completes the proof of \eqref{ptsi}.

\end{proof}

\medskip
\noindent {\bf Acknowledgements:} The first author was supported by
grants from the National Natural Science Foundation of China
\#11471142 and \#11271160. The second and third authors were
supported by the National Natural Science Foundation of China
\#11331005, the Program for Changjiang Scholars and Innovative
Research Team in University \#IRT13066, the Scientific Research
Funds of Huaqiao University (Grant No.15BS201), and the Special Fund
Basic Scientific Research of Central Colleges \#CCNU12C01001. The
first and second authors would like to thank Professor Renjun Duan
for many fruitful discussions on the topic of the paper.

\end{document}